\def \oR{{\overline {R}}}
\def \oX{{\overline {X}}}
\def \wX{{\widetilde {X}}}
\def \wR{{\widetilde {R}}}
\def\wt{\widetilde}
\def\ol{\overline}
\def \Rr{R^{red}}
\def \Xr{X^{red}}
\def\k{\mathbbm{k}}
\def \eps{\varepsilon}
\def \del{\delta}
\def \vp{\varphi}
\def \x{\langle x \rangle}
\def \sm{\smallsetminus}
\def\fm{\frak{m}} 
\def\fp{\frak{p}}
\def\fq{\frak{q}}
\newcommand{\R}{{\mathbb R}}
\newcommand{\Z}{{\mathbb Z}}
\newcommand{\C}{{\mathbb C}}
\newcommand{\Q}{{\mathbb Q}}
\newcommand{\K}{{\mathbb K}}   			
\newcommand{\kc}{{\mathcal C}}
\newcommand{\kf}{{\mathcal F}}
\newcommand{\km}{{\mathcal M}}
\newcommand{\kn}{{\mathcal N}}
\newcommand{\ko}{{\mathcal O}}
\newcommand{\kt}{{\mathcal T}}
\newcommand{\fn}{\mathfrak{n}}
\DeclareMathOperator{\Spec}{Spec}
\DeclareMathOperator{\Quot}{Quot}
\DeclareMathOperator{\Supp}{Supp}
\DeclareMathOperator{\Ann}{Ann}
\DeclareMathOperator{\NNor}{NNor}
\DeclareMathOperator{\Ker}{Ker}
\DeclareMathOperator{\NRed}{NRed}
\DeclareMathOperator{\Coker}{Coker}
\DeclareMathOperator{\nil}{nil}
\DeclareMathOperator{\INNS}{INNS}
\newtheorem{theorem}{Theorem}
\newtheorem{lemma}[theorem]{Lemma}
\newtheorem{corollary}[theorem]{Corollary}
\newtheorem{proposition}[theorem]{Proposition}
\newtheorem{definition}[theorem]{Definition}
\newtheorem{example}[theorem]{Example}
\newtheorem{remark}[theorem]{Remark}
\begin{document}
\title[The Delta Invariant of isolated Non-Normal Singularities] {The Delta Invariant and Fiberwise Normalization for Families of isolated Non-Normal Singularities}
\author{Gert-Martin Greuel and Gerhard Pfister}

\maketitle

\begin{abstract}
We prove the semicontinuity of the delta invariant
in a family of schemes or analytic varieties with finitely many (not necessarily reduced) isolated non-normal singularities,  in particular for families of generically reduced curves.
We define and use a modified delta invariant for isolated non-normal singularities of any dimension that takes care of embedded points. Our results  generalize results by Teissier and Chiang-Hsieh--Lipman for families of reduced curve singularities. The base ring for our families can be an arbitrary PID such that our semicontinuity result provides possible improvements for algorithms to compute the genus of a curve.\\

\noindent 
{\bf Mathematics Subject Classification – MSC2020.} 13B22, 13B40, 14B05, 14B07.\\
{ \bf Keywords.} Isolated non-normal singularity, delta invariant, semicontinuity, generically reduced curves, simultaneous normalization.
\end{abstract}
\bigskip 

\bigskip
\tableofcontents

\section*{Introduction}
The delta invariant, also called genus defect, is an important numerical invariant of a singular reduced curve and is therefore often considered  for algebraic curves over the complex numbers, but also for curves over finite fields, e.g. in coding theory. The delta invariant was extended to generically reduced  complex analytic curves in \cite{BG90} 
and it was shown that  it can be used to control the topology in a family of such curves by taking care of the influence of embedded points. In \cite{Gr17} the delta invariant
was further extended to complex-analytic isolated non-normal singularities of any dimension and 
its behavior was studied in connection with simultaneous normalization.

The study of simultaneous normalization of deformations of a reduced curve singularity has been initiated by Teissier in the 1970’s in the complex analytic setting. The main result was, that a flat family of reduced curve singularities over a normal base space admits a simultaneous normalization if and only if the delta invariant of the curve singularities is locally constant.
This was further carried on by Chiang-Hsieh and Lipman \cite{CL06}
in the algebraic setting for families of reduced curves defined over a perfect field, clarifying some points in the proof given in \cite{Te78}. In \cite{CL06}
the authors get also intermediate results for families of higher dimensional reduced and pure dimensional varieties, but the $\del$-constant criterion for simultaneous normalization is only proved for families of reduced curves 
(and for projective morphisms with equidimensional reduced  fibers of arbitrary dimension, replacing the $\delta$-invariant by the Hilbert polynomial).

The results by Chiang-Hsieh and Lipman motivated us to reconsider the $\del$-constant criterion for families of schemes with finitely many isolated (not necessarily reduced) non-normal singularities, including the case
of generically reduced curves, defined over an arbitrary field. We define and use in the algebraic setting a modified delta invariant for an isolated non-normal singularity (INNS) of any dimension analogous to the complex analytic case, which coincides with the classical  $\del$-invariant for reduced singularities.
One of our main results are semicontinuity theorems for this new $\del$-invariant for families of schemes parametrized by the spectrum of a principal ideal domain (Theorem \ref{thm.semicont1} and its corollaries in arbitrary characteristic, Theorem \ref{thm.char0} in characteristic 0).  We like to emphasize that the semicontinuity of $\del$ holds for fibers over closed and non-closed points in a neighbourhood of a given point (in contrast to e.g. \cite[Proposition 3.3] {CL06}).

We apply the semicontinuity to prove  a $\delta$-constant criterion for fiberwise resp. simultaneous normalization (the two notions coincide e.g. in characteristic 0) of a family of INNSs. This means that a family of affine Noetherian schemes over the spectrum of an arbitrary PID with fibers having only finitely many isolated non-normal singularities and with singular locus finite over the base admits a simultaneous normalization if and only if the  $\del$-invariant of the fibers is constant (for a precise formulation see Theorem \ref{thm.simnormPID} and Corollary \ref{cor.simnormPID}).

Although we use ideas from \cite{CL06}, the proofs of our main results are quite different. In \cite[Theorem 4.1]{CL06} it is assumed (for the $\del$-constant criterion for simultaneous normalization of reduced curves) that the fibers are reduced and pure dimensional and that the base scheme is the spectrum of a complete, or Henselian, or analytic normal local ring. We do neither assume that the fibers are reduced nor that they are pure dimensional.
Our restriction (in the more general situation of families of INNSs) is that the base scheme is the spectrum of a PID. We conjecture that the results hold also for normal base spaces of any dimension, but  the non-reducedness of the fibers provides essential technical difficulties.

Since our base rings include $\Z$ and $\k[t]$, $\k$ any field, we just mention in passing that our results have interesting computational applications. E.g., if an isolated non-normal singularity is defined over $\Z$  resp. over $\k[t]$, the computation of the $\del$-invariant over $\Q$ resp. $\k(t)$ 
can be estimated and speeded up by the (much cheaper) computation modulo any (not only lucky) prime $p\in \Z$ resp. modulo $\langle t-a\rangle$, $a$ any element in $\k$.  This applies of course also to $\del$ for reduced curves and hence 
can be used to improve algorithms to compute the genus of a curve.
We refer to \cite[Remark 24]{GP21}, where we considered $\delta$ for families of parametrized curve singularities and to \cite{GPS21} for an algorithm, showing that the semicontinuity can lead to an impressive speed up of the calculations.

That we allow non-reduced singularities in the fibers is not an artificial assumption but occurs naturally in connetion with families of parametrized curves. Consider e.g. an analytic morphism $\phi:\C \times S \to \C^n \times S$ over $S$ such $\phi_s:\C \to \C^n$ is the parametrization of a reduced curve $C_s$ in $\C^n $ for $s\in S$. Let $X=\phi(\C \times S)$ be closed in  $\C^n \times S$ and flat over $S$. Then the fibers $X_s$ of $X \to S$ have in general non reduced singularities (the reduction of $X_s$ coincides with  $C_s$) and our results apply to this situation.
\smallskip

All rings in this paper are associative, commutative and with 1, ring maps map 1 to 1, and a ring map of local rings maps the maximal ideal to the maximal ideal. Moreover, we assume that all rings, modules, and schemes are Noetherian, without always explicitly stating this.
\medskip

{\bf Notation: }\label{not.>}
$A, R$ denote rings, $\k$ an arbitrary field, $\dim$ the Krull dimension and $\dim_\k$ the $\k$-vector space dimension.

If $\fp_1, ..., \fp_r$ are the minimal prime ideals of $R$, we denote by $\fp^i$ 
the intersection\footnote{\,The empty intersection is the whole ring $R$. E.g., if no minimal primes $\fp_j$ with $\dim R/\fp_j = i$ exist then $R^i=0$ and $X^i = \emptyset$.} of the $\fp_j$ with $\dim R/\fp_j=i$ and by  $\fp^{>i}$ the intersection
of the $\fp_j$ with $\dim R/\fp_j>i$. 
With $X= \Spec R$ and $\Xr = \Spec \Rr$, where $\Rr$ denotes the {\em reduction} of $R$, we define for $i \geq 0$:
\[
\begin{array}{cllllll}
 R^i &:= &R/\fp^i, \ &X^i &:=&\Spec R^i,\\
R^{>i}&:= &R/\fp^{>i}, \ &X^{>i}&:= &\Spec R^{>i}.
\end{array}
\]
Note that $R^i$  and $R^{>i}$ are reduced  and thus $X^i$  and $X^{>i}$ are reduced subschemes of $X$.
In particular, $X^0$ is a finite set of reduced, isolated points of $\Xr$
and $X^{>0} =\Xr$ iff $X$ has no isolated points.

We set $$r_i(X) := \sharp\{\text{irreducible componentes of } X^i\},$$
which is the number of $i$-dimensional irreducible components of $X$\,\footnote{\,By definition, the irreducible components of $X$ are the reduced schemes $\Spec R/\fp_j, j=1,...,r.$}.

If  $\vp: A\to R$ is a ring map, $\fp$ a prime ideal of $A$  and $k(\fp) = A_\fp/\fp A_\fp =Q(A/\fp)$ the residue field of $A$ at $\fp$, we set for an $R$-module $M$
$$M(\fp) := M_\fp\otimes_{A_\fp} k(\fp)=M \otimes_A k(\fp)$$  and call it the {\em fiber of $M$ over $\fp$} and call $R(\fp)$ denotes the fiber of $\vp$ over $\fp$.

Let $f=\Spec \vp: X = \Spec R \to \Spec A = S$ be the induced map of schemes and $t\in S$ the point corresponding to $\fp$. Then 
$$X_t := f^{-1}(t) := \Spec R(\fp)$$
denotes the fiber of $f$ over $t$. We set $f^i:=f\,|\,X^i$ resp. $f^{>i}:=f\,|\,X^{>i}$
and $(X^i)_t:=(f^i)^{-1}(t)$ resp. $(X^{>i})_t:=(f^{>i})^{-1}(t)$

\medskip

{\bf Acknowledgement:} We thank the reviewer for useful comments and Dmitry Kerner for his questions that helped improve the presentation.

\section{Delta for an Isolated Non-Normal Singularity}\label{sec.1}

Let $R$ be a reduced ring. Then $Q(R)$, the {\em total quotient ring of  $R$}, is a direct product of fields. If
$\fp_1,...,\fp_r$ are the minimal primes of $R$ then $Q(R)$ is the direct product of the fields  $Q(R/\fp_j) $.   $\oR$ denotes the {\em integral closure} of $R$ in $Q(R)$. $\oR$ or, more precisely, the natural inclusion  $R \hookrightarrow \overline R$  is called the {\em normalization of $R$}. $\oR$ is the direct product of the integral closures 
of $R/\fp_j$ in  $Q(R/\fp_j) $ (cf. \cite[Lemma 28.52.3]{Stack}, tag 035P).

If $R$ is not reduced, let $\pi: R \twoheadrightarrow R^{red}$ be  the natural projection and  
$\nil(R) := \ker(\pi)$   the ideal of nilpotent elements of  $R$.
We denote by $\nu^{red}: \Rr \hookrightarrow  \oR$ the normalization of $R^{red}$
and call $\oR$ or the composition
$$ \nu:= \nu^{red} \circ \pi: R \twoheadrightarrow R^{red} \hookrightarrow \overline R $$
 the {\em normalization} of $R$.
$R$ is called  {\em normal} if $\nu: R \to \overline R$ is an isomorphism.
This is equivalent to
$R_\fp$ being a normal domain for every prime ideal $\fp\subset R.$ We often write $\oR/\Rr$ in place of $\oR / \nu(R)$. 

For an arbitrary $R$-module let $\Ann_R(M)=\{g\in R\, | \, gM=0\}$ be the annihilator ideal of $M$ in $R$.

\begin{definition}\label{def.cond}
Let $R$ be a  ring.  We define 
\begin{enumerate} 
\item $\kc_R :=\Ann_R(\oR/\Rr)\subset R, \text { the {\em conductor ideal} of }  R,\\
 \wt{\kc}_R  := \kc_R \cap \Ann_R(\nil(R)) \subset R, \text { the {\em extended conductor ideal} of }  R,$\\
$C_R:= \Spec R/\kc_R$ the {\em conductor scheme} of $R$ and\\  
$\wt{ C}_R:= \Spec R/\wt{\kc}_R$ the {\em extended  conductor scheme} of   $R$.
\item The {\em non-normal locus of $R$} is denoted as
$$\NNor(R):= \{\fp \in \Spec R \, |\, R_\fp \text{ is not normal} \}.$$
It contains the {\em non-reduced locus}
$$\NRed(R):= \{\fp \in \Spec R \, |\, R_\fp \text{ is not reduced} \}.$$
 \end{enumerate}
\end{definition}

\begin{remark}\label{rm.nnor}\text{ }
{\em
\begin{enumerate}
\item $\kc_{R^{red}}= \Ann_{\Rr}(\oR/\Rr) = \pi(\kc_R) $ is the conductor ideal of $\Rr$. We have $\nil(R) \subset \kc_R$ since $\nil(R) = \ker(\nu)$, and $\pi$ induces an isomorphism  $R/\kc_R \xrightarrow {\cong} \Rr/\kc_{R^{red}}$.

\item We have always $\NNor(R)\subset V(\wt{\kc}_R)$ but equality may not hold and $\NNor(R)$ may not be closed in $\Spec R$. However, if $\oR/\Rr$ is (module-) finite over $R$ (equivalently, $\oR$ is finite over $R$), then 
$\Supp_R(\oR/\Rr)$  coincides with $V(\kc_R)$ and is therefore closed in $\Spec R$. 
  
\item  We get: If $\oR$ is finite over $R$ (for examples see Remark \ref{rm.nagata}), then the {\em non-normal locus} of $R$
\begin{align*}
\NNor(R) &= \NRed(R) \cup \NNor(R^{red})\\  
               &= V( \Ann_R(\nil(R))) \cup V(\kc_R) \\
               &= V( \wt{\kc}_R).
\end{align*}
is the zero-locus of the extended conductor ideal $\wt {\kc}_R$ and hence closed in $\Spec R$. 
\end{enumerate}
}
\end{remark}

 \begin{definition} \label{def.inns} We say that $\fp \in \Spec R$ is an {\em isolated non-normal point} of $R$, or that {\em $R$ has an isolated non-normal singularity}  at $\fp$, 
if $\fp$ is an isolated point of $\Spec R$ or an isolated point of $\NNor(R)$.
 We also say that $\fp$ is an $\INNS$ (of $R$ or of $\Spec R$). Furthermore we set
$$ \INNS(R) := \{\fp \in \Spec R \mid \fp \text { is an  {\em INNS }of } R\},$$
the locus of isolated non-normal points of $R$.
 \end{definition}
 
 We note that $\fp$ is an $\INNS$ if either $R_\fp$ is not normal (and there is an open neighbourhood $U$ of $\fp$ with $R_\fq$ normal for all $\fq \in U\smallsetminus \{\fp\}$), or $R_\fp$  is normal and $\fp$ is an isolated reduced point of $\Spec R$. We include  isolated reduced points in our definition of $\INNS$, since these play a special role in our definition of the delta invariant (Definition \ref{def.delta} and Remark \ref{rm.eps}).
Isolated singularities are  $\INNS$, with typical examples (generically) reduced curves.
 
 {\begin{remark}\label{rem.inns}
{\em Using the notations from the introduction we have 
$$V(\fp^0)=\{\fp \in \Spec R \mid \fp \text{ is a reduced isolated point of } \Spec R\}.$$
If $\oR$ is  finite over $R$,  then 
$\fp \in\INNS(R)$ iff  $\fp \in V(\fp^0)$ or $\fp$ is an isolated point of  $V(\wt{\kc}_R)$. Therefore 
 every $\fp \in \INNS(R)$ is a closed point of  $\Spec R$, i.e. a maximal ideal.}
 \end{remark}
 \smallskip
 
\begin{lemma}\label{lem.nnor}
Let $\oR$ be finite over $R$. Then $\INNS(R)$ is a finite set if and only if $R/\wt{\kc}_R$ is an Artinian $R$-module.
\end{lemma}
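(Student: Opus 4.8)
The plan is to reduce the statement to a dimension count for the ring $R/\wt{\kc}_R$ and then apply the standard characterization of Artinian rings. Since $\oR$ is finite over $R$, Remark \ref{rm.nnor}(3) identifies the non-normal locus as the closed set $NNor(R)=V(\wt{\kc}_R)$, so as a topological space the set of non-normal points of $R$ is exactly $\Spec(R/\wt{\kc}_R)$, i.e. the primes containing $\wt{\kc}_R$. As $R/\wt{\kc}_R$ is Noetherian, it is Artinian precisely when $\dim R/\wt{\kc}_R=0$, that is, when every prime of $R/\wt{\kc}_R$ is maximal. Hence it suffices to prove that $\Spec(R/\wt{\kc}_R)$ is a finite set if and only if every prime containing $\wt{\kc}_R$ is maximal.

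The implication from Artinian to finiteness is immediate: an Artinian ring has only finitely many primes, all of them maximal, so $NNor(R)=\Spec(R/\wt{\kc}_R)$ is then a finite set of closed points and $R$ has finitely many non-normal points.

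For the converse I would show that every minimal prime of $\wt{\kc}_R$ is maximal; since every prime of $R/\wt{\kc}_R$ contains a minimal one, this forces $\dim R/\wt{\kc}_R=0$. Assume $NNor(R)$ is finite and let $\fp$ be a minimal prime of $\wt{\kc}_R$. Because $\fp$ is minimal in $V(\wt{\kc}_R)$, no other prime of $V(\wt{\kc}_R)$ specializes to $\fp$, so $V(\wt{\kc}_R)\smallsetminus\{\fp\}$ is stable under specialization and therefore closed in the finite space $V(\wt{\kc}_R)$; thus $\{\fp\}$ is open in $NNor(R)$ and $\fp$ is an isolated point of $V(\wt{\kc}_R)$. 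Consequently $\fp$ is a non-normal point admitting an open neighbourhood meeting $NNor(R)$ only in $\fp$, i.e. an INNS in the sense of Definition \ref{def.inns}. By the observation recorded immediately before the lemma (which again uses that $\oR$ is finite over $R$), such an isolated non-normal point is a maximal ideal. Therefore all minimal primes of $\wt{\kc}_R$ are maximal, $\dim R/\wt{\kc}_R=0$, and $R/\wt{\kc}_R$ is Artinian.

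The main obstacle is the converse, and more precisely the passage from the mere topological finiteness of $NNor(R)$ to the Krull-dimension statement that every non-normal point is maximal: finiteness of a closed subset of $\Spec R$ does not by itself force dimension zero. The decisive input is the preceding observation that an isolated non-normal point is necessarily a closed point, which rests on the finiteness of $\oR$ over $R$; granting it, the remaining steps are only the elementary point-set remark that minimal primes of a finite closed set are isolated, together with the Noetherian characterization of Artinian rings.
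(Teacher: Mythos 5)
Your proof follows the same route as the paper's: both rest on the identification $NNor(R)=V(\wt{\kc}_R)=\Spec(R/\wt{\kc}_R)$ from Remark \ref{rm.nnor}(3) and then on the characterization of Artinian rings among Noetherian ones. The forward direction is the same in both. The difference is in the converse, and there your version is the more careful one: the paper settles it by citing \cite[Prop.~8.3]{AM69} for the equivalence ``$R/\wt{\kc}_R$ Artinian $\Leftrightarrow$ $\Spec(R/\wt{\kc}_R)$ finite'', but for a Noetherian ring only the implication ``Artinian $\Rightarrow$ finite spectrum'' is automatic — a discrete valuation ring has a two-point spectrum and is not Artinian — so finiteness of $NNor(R)$ does not by itself give $\dim R/\wt{\kc}_R=0$. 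You correctly isolate this as the crux and repair it: a minimal prime $\fp$ of a finite specialization-stable set is open in it, hence an INNS in the sense of Definition \ref{def.inns}, hence a maximal ideal by the observation recorded just before the lemma. Granted that observation, your argument is complete.

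Be aware, however, that the observation you import is itself stated in the paper without proof, and it is exactly where the difficulty lives. For a non-normal prime $\fp$ one always has $V(\fp)\subseteq NNor(R)$ (normality is preserved under localization), so ``$\{\fp\}$ open in $V(\wt{\kc}_R)$'' only says that $\fp$ is open in its own closure $V(\fp)$ — and a generic point can be open in its closure without the closure being a single point. Concretely, for $A=\k[t]_{\langle t\rangle}$ and $R=A[x,y]/\langle y^2-x^2(x+1)\rangle$ one has $\wt{\kc}_R=\kc_R$ with $V(\wt{\kc}_R)=V(x,y)=\{\langle x,y\rangle,\langle x,y,t\rangle\}$, a two-point set whose generic point $\langle x,y\rangle$ is open in it and is a non-maximal INNS, while $R/\wt{\kc}_R\cong A$ is a DVR, not Artinian. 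So the step ``isolated non-normal point $\Rightarrow$ maximal ideal'' (and with it the lemma as literally stated) requires an additional hypothesis such as $R$ being Jacobson (e.g.\ of finite type over a field or over $\Z$), under which a positive-dimensional closed subset cannot be finite. Your proof is faithful to the paper and correctly locates the load-bearing step; it just does not (and, without such a hypothesis, cannot) discharge it.
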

\begin{proof}
$V(\fp^0)$ is finite and we have $\NNor(R) =V(\wt{\kc}_R)= \Supp_R(R/\wt{\kc}_R)$.
The result follows since $R/\wt{\kc}_R$ is Artinian 
$\Leftrightarrow$ $\Spec R/\wt{\kc}_R$ is a finite set (\cite[Prop. 8.3]{AM69}). 
\end{proof}
 
\medskip
 
We are now going to define the delta and the epsilon invariant of a local ring. 
 Let $\fm \in \Spec R$ be a maximal ideal of $R$ and $M$ an $R$-module.  The {\em $0$-th local cohomology group} of $M$ is the submodule
$$H^0_\fm(M) = \{x\in M\ | \ \fm^k x = 0 \text { for some } k \geq 0\}.$$
Since $M$ is Noetherian, $H^0_\fm(M)$ is Noetherian too and is annihilated by some power of $\fm$; hence $H^0_\fm(M)$ has finite length, i.e. is Artinian.
\smallskip

\begin{definition} \label{def.delta}
Let $(R,\fm)$ be a  local  ring with normalization $\oR$, $\k$ a field, and $\k \to R$ a ring map. We define:

\begin{itemize} 
\item [(i)]  the {\em  epsilon invariant} of $R$ (w.r.t. $\k$), 
$$\varepsilon_\k(R) := \dim_\k H^0_\fm( R),$$
\item [(ii)]the {\em  delta invariant} of $R$ (w.r.t. $\k$),
$$\delta_\k (R):=  \dim_\k \overline R/R^{red} - \eps_\k(R),$$
\item [(iii)] the {\em (multiplicity of the) conductor} of $R$ (w.r.t. $\k$) 
$$c_\k (R):=  \dim_\k \oR/\kc_{R^{red}}- \eps_\k(R).$$ 
\end{itemize}
\end{definition}
Hence, if $R$ is reduced and $\dim R >0$ then $\eps_\k(R)=0$ and $\delta_\k (R)=  \dim_\k \overline R/R$, the usual definition of $\delta_\k$.

\begin{remark}\label{rm.eps}
{\em
Let   $K=R/\fm$ denote the residue field of the local ring $(R,\fm)$ and assume that $\dim_\k K < \infty$. 
\begin{enumerate}
\item $\eps_\k(R)$ is always finite while $\delta_\k(R)$ and $c_\k (R)$ may be infinite. If  $R$ is an INNS with $\oR$ finite over $R$, then $\delta_\k(R)$ and $c_\k (R)$ are also finite  (Lemma \ref{lem.artin}).

\item  If  $\dim R = 0$, then  $\oR=R^{red} = K$,
 $\nil(R) = \fm$ and $H^0_{\fm}(R) =R$.  
We get  
 $\delta_\k (R) = c_\k(R) = - \eps_\k(R) = - \dim_\k R = -\dim_\k \nil(R) -\dim_\k K <0$. \\
 In particular, $\del$ and $\eps$ are never 0 and $\delta_\k (R)=- \eps_\k(R) = -\dim_\k K $  if $R$ is a reduced (hence normal) isolated point.
 
\item Let  $\dim R > 0$ and let $R$ be an INNS with $\oR$ finite over $R$. Since $R_\fp$ is reduced for $\fp \in U  \smallsetminus \fm$, $U$ some open neighbourhood of $\fm$ in $\Spec R$, we have $\nil(R) =  H^0_{\fm}(R)$ and   $\eps_\k(R) = \dim_\k \nil(R)$. 

\item Let  $\dim R > 0$.  If $R$ is normal then $\delta_\k(R)=c_\k (R)=\eps_\k(R)=0$. 
If $R$ is reduced, then $R$ is normal $\iff$ $\delta_\k(R)=0$ $\iff$  $c_\k(R)=0$ (for the last equivalence see Lemma \ref{lem.artin}(2)). 
But if $R$ is not reduced, then $\delta_\k(R)=0$ may happen for non-normal $R$ (see Example \ref{ex.delta} (3)).
\end{enumerate}
 }
\end{remark}

\begin{example}\label{ex.delta}
{\em \begin{enumerate}
\item The ideal 
$I=\langle x\rangle \cap \langle x^2,y^2,xy\rangle = \langle x^2,xy\rangle \subset \k[[x,y]]$
defines
a line with embedded component. With  $R=\k[[x,y]]/I$ we get $\delta_\k(\Rr) = 0$ and $\eps_\k(R) = 1$, hence $\delta_\k(R) = -1$ and $c_\k (R) =-1$.

\item The ideal 
$$\quad\quad I=\langle x^3y+x^2y^2,x^2y^2+xy^3\rangle = 
\langle x+y\rangle \cap \langle x\rangle \cap \langle y\rangle \cap \langle x^2,y^3\rangle \subset \k[[x,y]],$$ 
defines 3 lines with an embedded component at $0$. For $R = \k[[x,y]]/I$ we have $\delta_\k(\Rr) = 3$ and $\eps_\k(R) = \dim_\k \sqrt I/I =1$\,\footnote{\, We compute $\eps$ and $\del$ with {\sc Singular} \cite{DGPS}:  {\tt codim} computes
$ \dim_\k \nil(R)$ = $\dim_\k \sqrt I/I$ and the procedure {\tt normal(..,"wd")} computes $\delta_\k(\Rr)$;
the number of isolated points of $\Spec R$ can be determined with a primary decomposition of $I$.} and hence $\delta_\k(R) = 2$. Since $\Rr$ is a reduced plane curve singularity, we get
$c_\k (\Rr)=  2\delta_\k (\Rr)=6 $ and $c_\k (R)=c_\k (\Rr)- \eps_\k(R) = 5.$

\item $I= \langle z,x^2-y^3 \rangle \cap \langle x, y, z^2 \rangle$, $R = \k[[x,y]]/I$, defines a cusp in the $(x,y)$-plane and an embedded point in the $z$-direction. Then 
$\delta_\k(\Rr) = 1$ and $\eps_\k(R)  =1$ and hence $\delta_\k(R) = 0$.
\end{enumerate}
}
\end{example}

\begin{lemma}\label{lem.fields}
 Let $(R,\fm)$ be a local ring, $K= R/\fm$, $\k \to R$ a ring map, and $M \neq 0$ a finitely generated $R$-module. Then $ \dim_\k M <\infty$ $\Leftrightarrow$ $M$ is Artinian and $\dim_\k K < \infty$.
 \end{lemma}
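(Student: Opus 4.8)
The plan is to prove the two implications separately, using throughout the standing hypothesis that all rings and modules are Noetherian; in particular $M$, being finitely generated over the Noetherian ring $R$, is a Noetherian $R$-module. This observation does part of the work for free, since ``Artinian'' will then upgrade to ``finite length''.

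For the forward implication, suppose $\dim_\k M < \infty$. Any strictly descending chain of $R$-submodules of $M$ is in particular a strictly descending chain of $\k$-subspaces, and such a chain must terminate because $\dim_\k M$ is finite; hence $M$ is Artinian. To see that $\dim_\k K < \infty$, I would invoke Nakayama's lemma: since $M \neq 0$ is finitely generated, $\fm M \neq M$, so the $K$-vector space $M/\fm M$ is nonzero. Choosing a nonzero element $v$ and sending $a \mapsto a v$ gives a $\k$-linear embedding $K \hookrightarrow M/\fm M$, and since $M/\fm M$ is a quotient of $M$ we obtain $\dim_\k K \leq \dim_\k(M/\fm M) \leq \dim_\k M < \infty$.

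For the converse, assume $M$ is Artinian and $\dim_\k K < \infty$. Being simultaneously Noetherian and Artinian, $M$ has finite length $\ell_R(M)$, so it admits a composition series whose successive quotients are simple $R$-modules. Because $R$ is local, the only simple $R$-module up to isomorphism is the residue field $K = R/\fm$. Reading the composition series as a filtration by $\k$-subspaces and summing dimensions of the quotients yields $\dim_\k M = \ell_R(M)\cdot \dim_\k K$, which is finite.

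The two routine ingredients are that a descending chain of subspaces of a finite-dimensional space stabilizes and that a Noetherian-plus-Artinian module has finite length. The one genuinely load-bearing step, and the only place where the hypotheses $M \neq 0$ and \emph{finitely generated} enter, is the appeal to Nakayama's lemma guaranteeing $M/\fm M \neq 0$; everything else is bookkeeping with $\k$-dimensions together with the identification of the simple modules over the local ring $R$.
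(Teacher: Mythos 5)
Your proof is correct and follows essentially the same route as the paper's: the forward direction is identical (the descending chain condition inherited from $\k$-subspaces, then Nakayama's lemma to embed $K$ into $M/\fm M$), and in the converse you filter $M$ by a composition series where the paper instead uses the $\fm$-adic filtration $\fm^k M/\fm^{k+1}M$ --- both arguments reduce to the graded pieces being finite-dimensional $K$-vector spaces, hence finite-dimensional over $\k$. No gaps.
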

 
\begin{proof} If $ \dim_\k M <\infty$ then $M$ is Artinian since it satisfies obviously the descending chain condition.  By Nakayama's lemma,
$M/\fm M$ is a finite dimensional $K$-vector space $ \neq 0$. We have $\dim_\k K \leq  \dim_\k M/\fm M\leq \dim_\k M < \infty$.
 Conversely, if  $M$ Artinian then $\fm^n M=0$ for some $n$.  The $K$-vector space $\fm^kM/\fm^{k+1}M$ has finite $K$-dimension, hence finite $\k$-dimension since  $\dim_\k K < \infty$. Thus $ \dim_\k M <\infty$. 
\end{proof}

\begin{lemma}\label{lem.artin}
 Let $(R,\fm,K)$  be a  local  ring with normalization $\oR$ finite over $R$. 
 \begin{enumerate} 
 \item   The following are equivalent:
 \begin{enumerate}
 \item[(i)]  $\fm$ is an $\INNS$; 
  \item[(ii)] $R/\wt{\kc}_R$ is an Artinian $R$-module;
 \item[(iii)] $\oR/\kc_{R^{red}}$ and $\nil(R)$ are Artinian  $R$-modules;
 \item[(iv)] $\oR/R^{red}$ and $\nil(R)$ are Artinian $R$-modules.
 \end{enumerate}  
 \item  Let $\k$ be a field and $\k \to R$ a ring map. Then the following are equivalent:
   \begin{enumerate}
 \item[(i)]  $\fm$ is an $\INNS$ and $\dim_\k K < \infty$; 
  \item[(ii)] $\dim_\k (R/\wt{\kc}_R)$ is finite, and $\dim_\k K < \infty$ if $R=K$;
  \item[(iii)] $c_\k (\Rr)$ and $\eps_\k(R)$ are finite. 
  \item[(iv)] $\delta_\k(\Rr)$ and $\eps_\k(R)$ are finite; 
\end{enumerate}  
If any of these conditions hold, $c_\k (R)$ and $\delta_\k(R)$ are finite and satisfy 
 $$c_\k (R) = \delta_\k(R) + \dim_\k(R/\kc_R).$$
 \end{enumerate}
\end{lemma}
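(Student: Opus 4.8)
The plan is to reduce Part (1) to a statement about supports, and then to lift it to Part (2) through the dictionary of Lemma \ref{lem.fields} between ``finite $\k$-dimension'' and ``Artinian together with $\dim_\k K<\infty$'', where $K=R/\fm$.

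For Part (1), the starting observation is that in a local ring the closed point $\fm$ has no proper open neighbourhood: any closed set avoiding $\fm$ is $V(I)$ with $I\not\subset\fm$, hence $I=R$ and the set is empty. Thus $\fm$ is an INNS iff $\NNor(R)\subseteq\{\fm\}$. Since $\oR$ is finite over $R$, Remark \ref{rm.nnor} identifies $\NNor(R)=V(\wt{\kc}_R)=\Supp_R(\nil(R))\cup\Supp_R(\oR/\Rr)$, so $\fm$ is an INNS iff the finitely generated modules $\nil(R)$ and $\oR/\Rr$ both have support in $\{\fm\}$, i.e. are Artinian; as $R/\wt{\kc}_R$ is cyclic this is the same as $R/\wt{\kc}_R$ being Artinian. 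This yields (i)$\Leftrightarrow$(ii)$\Leftrightarrow$(iv). For (iii)$\Leftrightarrow$(iv) I would use the short exact sequence of $R$-modules
\[
0\longrightarrow R/\kc_R\longrightarrow \oR/\kc_{R^{red}}\longrightarrow \oR/\Rr\longrightarrow 0,
\]
legitimate because $\kc_{R^{red}}\subseteq\Rr\subseteq\oR$ and $\Rr/\kc_{R^{red}}\cong R/\kc_R$ (Remark \ref{rm.nnor}); since $\Supp_R(R/\kc_R)=V(\kc_R)=\Supp_R(\oR/\Rr)$, the sequence gives $\Supp_R(\oR/\kc_{R^{red}})=\Supp_R(\oR/\Rr)$, so $\oR/\kc_{R^{red}}$ is Artinian iff $\oR/\Rr$ is.

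For Part (2) the forward directions are the clean ones. Assuming (i), Part (1) makes $R/\wt{\kc}_R$, $\oR/\kc_{R^{red}}$, $\oR/\Rr$ and $\nil(R)$ Artinian and, together with $\dim_\k K<\infty$, Lemma \ref{lem.fields} upgrades this to finite $\k$-dimension, giving (ii); moreover $\eps_\k(R)=\dim_\k H^0_\fm(R)<\infty$ (as $H^0_\fm(R)$ is always Artinian), and using $\nil(R)=H^0_\fm(R)$ for a positive-dimensional INNS (Remark \ref{rm.eps}(3)) one gets $\eps_\k(R)=\dim_\k\nil(R)$, while the correction term $\eps_\k(\Rr)$ vanishes in the absence of isolated points; hence $c_\k(\Rr)=\dim_\k(\oR/\kc_{R^{red}})$ and $\del_\k(\Rr)=\dim_\k(\oR/\Rr)$ are finite, which is (iii) and (iv). The displayed formula then follows from additivity of $\dim_\k$ along the same exact sequence, $\dim_\k(\oR/\kc_{R^{red}})=\dim_\k(R/\kc_R)+\dim_\k(\oR/\Rr)$, by subtracting $\eps_\k(R)$ from the two outer terms to get $c_\k(R)=\del_\k(R)+\dim_\k(R/\kc_R)$.

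The heart of the matter, and the step I expect to cost the most, is the converse in Part (2): recovering that $\fm$ is an INNS, i.e. that $\nil(R)$ and $\oR/\Rr$ are genuinely Artinian, from the mere finiteness of $c_\k(\Rr)$ (or $\del_\k(\Rr)$) and $\eps_\k(R)$. Finiteness of $c_\k(\Rr)$ forces $\dim_\k(\oR/\kc_{R^{red}})<\infty$, hence $\oR/\Rr$ Artinian, so the normalization defect is under control; the delicate point is the nilpotents, because $\eps_\k(R)=\dim_\k H^0_\fm(R)$ detects only the $\fm$-power-torsion of $R$, whereas Part (1) needs all of $\nil(R)$ to be Artinian. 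The identification $\nil(R)=H^0_\fm(R)$ that closes this gap is exactly Remark \ref{rm.eps}(3), but it presupposes the very INNS property one wants to deduce; the argument must therefore be organised to avoid circularity (e.g. by first controlling the locus away from $\fm$, or by feeding in that the rings considered are generically reduced with at most isolated embedded points, so that $H^0_\fm(R)$ already exhausts $\nil(R)$). The remaining bookkeeping concerns the genuinely degenerate cases — $\dim R=0$, where $\eps_\k$, $\del_\k$, $c_\k$ may be negative (Remark \ref{rm.eps}(2)), and $R$ already normal, where $R/\wt{\kc}_R=0$ — which have to be checked by hand.
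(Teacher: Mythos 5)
Parts of your proposal are fine and coincide with the paper's argument: Part (1) via the identification $\NNor(R)=\Supp_R(\nil(R))\cup\Supp_R(\oR/\Rr)$ together with the fact that a finitely generated module is Artinian iff its support is contained in $\{\fm\}$; the implications (i)$\Rightarrow$(ii),(iii),(iv) in Part (2) via Lemma \ref{lem.fields}; and the formula $c_\k(R)=\delta_\k(R)+\dim_\k(R/\kc_R)$ from the exact sequence $0\to \Rr/\kc_{R^{red}}\to\oR/\kc_{R^{red}}\to\oR/\Rr\to 0$, which is exactly the sequence the paper uses.

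The gap is the one you yourself flag and then do not close: the implications (iii)$\Rightarrow$(i) and (iv)$\Rightarrow$(i) in Part (2). You only list possible strategies (``control the locus away from $\fm$'', ``feed in generic reducedness'') without executing any, so as it stands the proposal does not prove the equivalence. Your suspicion of circularity is, however, exactly on target: the paper's own proof disposes of this step by ``noting that $\dim_\k\nil(R)=\eps_\k(R)$ if $\dim R>0$'', which is Remark \ref{rm.eps}(3) and already presupposes that $\fm$ is an INNS. Without such a hypothesis the implication fails: for $R=\k[[x,y]]/\langle x^2\rangle$ one has $H^0_\fm(R)=0$, hence $\eps_\k(R)=0$, and $\Rr=\k[[y]]$ is normal, hence $\delta_\k(\Rr)=c_\k(\Rr)=0$, so (iii) and (iv) hold; but $R$ is non-normal at the non-maximal prime $\langle x\rangle$ and $R/\wt{\kc}_R\cong\k[[y]]$ is not Artinian, so (i) and (ii) fail. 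The statement (and both proofs) are repaired by replacing ``$\eps_\k(R)$ finite'' in (iii) and (iv) by ``$\dim_\k\nil(R)$ finite'' (equivalently, by assuming $\nil(R)=H^0_\fm(R)$, i.e.\ that $R$ is reduced away from $\fm$); with that reading your Part (1) argument, combined with Lemma \ref{lem.fields} applied to $\nil(R)$ and $\oR/\Rr$, closes the converse at once. So the missing step is not something you could have supplied by more work along the stated lines --- it is a point where the lemma needs the extra hypothesis you guessed at.
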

 
\begin{proof} 
If $\Spec R$ is a reduced point (i.e., $R=K$) then the statements (i),...,(iv) from (1)  all hold. Moreover, the statements (i),...,(iv) from (2) are equivalent since $\dim_\k (R/\wt{\kc}_R) = c_\k (\Rr) =\delta_\k(\Rr)=0$ and $\eps_\k(R)<\infty \iff \dim_\k K < \infty$. 
If $R$ is normal of dimension $>0$ none of the statements (i),...,(iv) from (1) and (2) hold.
We may thus assume that $R$ is not normal. 

(1) It is well known that  a finitely generated $R$-module $M\neq 0$ is Artinian $\Leftrightarrow$ $\fm^k M=0$ for some $k>0$ $\Leftrightarrow$ $\dim M=0$ $\Leftrightarrow$ $\Supp_R (M) =\{\fm\}$. Now (1) follows from
 $\NNor(R) = \Supp_R(R/\wt{\kc}_R) = \Supp_R(\nil(R)) \cup \Supp_R(\oR/\Rr)$.

(2) The equivalence of (i) - (iv) follows from (1) for $\k=K$, noting that $\eps_\k(R) = \dim_\k \nil(R)$ if $\dim(R) >0$ and $\dim_\k \nil(R) = \eps_\k(R)-1$  if $\dim(R) =0$.
Together with Lemma \ref{lem.fields} the  equivalence follows for arbitrary $K$.
The exact sequence
$$ 0 \to \Rr/\kc_{R^{red}} \to \oR/\kc_{R^{red}}  \to \oR/R^{red} \to 0$$
implies $c_\k (\Rr) = \delta_\k(\Rr) + \dim_\k(\Rr/\kc_{R^{red}} )$ and hence $c_\k (R) = \delta_\k(R) + \dim_\k(R/\kc_{R} )$ by definition of $c_\k$ and $\delta_\k$.
\end{proof}

Now let  $R$ be a not necessarily local ring with $\oR$ finite over $R$. 
Since $\wt{\kc}_{R\fp}=(\wt{\kc}_R)_\fp$
it follows from Lemma \ref{lem.nnor} and \ref{lem.artin} that 
$R$ has only finitely many non-normal points  $\iff$ $R/\wt{\kc}_R$ is Artinian.
If $R$ is a $\k$-algebra then
$\dim_\k (R/\wt{\kc}_R) < \infty$  $\iff$  $\NNor(R)$ is finite and 
$\dim_\k k(\fp) < \infty$ for all $\fp \in \NNor(R)$ and this implies the finiteness of $\del$, $\eps$ and $c$ ad $\fp$.
\begin{definition}\label{def.deltaglob}
Let $R$ be a $\k$-algebra with normalization $\oR$ finite over $R$. Assume that
 $R$ has only finitely many isolated non-normal points 
and that $\dim_\k k(\fp)< \infty$ for all $\fp\in \INNS(R).$
We define 
$$\delta_\k (R) := \sum_{\fp \, \in \, \INNS(R)}\delta_\k (R_{\fp}),$$
 $\eps_\k (R) := \sum_\fp \eps_\k (R_{\fp})$ and $c_\k (R) := \sum_\fp c_\k (R_\fp)$  ($\fp$ runs through $\INNS(R)$), which are all finite.
\end{definition}
Note that every isolated point of $\Spec R$ (reduced or not) counts in the above sum.

\begin{example}{\em
Let $R_\C=\C[x,y]/ I$, $I=\langle y^2-2x^2 \rangle \cap \langle y-x^2\rangle$. 
 $V(I)$ consists of two straight lines and a parabola meeting in $(0,0)$ and in $(\pm\sqrt 2,2)$. The three INNS correspond to the maximal ideals $\fp, \pm\fq$.
 $\fp$ is a triple point  with $\del_\C(R_{\C,\fp})=3$,
 while $\pm\fq$ are ordinary nodes with $\del_\C(R_{\C,\pm\fq}) = 1$ each, hence $\del_\C(R_\C)=5.$ 
 
Let $R_\Q=\Q[x,y]/ I$,  with $I$ as above. Then $R_\Q$ has (in $\Spec R_\Q$) two INNS, at  the maximal ideals $\fp= \langle x,y\rangle$ and 
$\fq= \langle x^2-2,y-2\rangle$, with $k(\fp)=\Q$ and $k(\fq)=\Q(\sqrt 2).$
We get $\del_\Q (R_{\Q,\fp}) =3$ and $\del_\Q (R_{\Q,\fq}) =2$, hence
$\del_\Q (R_\Q) =5.$
The equality $\del_\Q (R_\Q)= \del_\C (R_\C)$ is a general fact, since\,\footnote{\,Let $B$ be a $\k$-algebra and $K$ a separable field extension of $\k$ then $\overline{B\otimes_\k K}=\overline B\otimes_\k K$ (\cite{Stack}, Lemma 32.27.4, tag 0C3N) and hence 
$\delta_K(B\otimes_\k K)=\delta_\k (B).$
}
$R_{\mathbb C}=R_{\mathbb Q}\otimes_{\mathbb Q}\mathbb C$.
}
\end{example}
\medskip

The following interpretation of $\del$  as an Euler characteristic is useful.
 Using that $R^{red}=R^0 \oplus R^{>0}$,  $\oR=\oR^0 \oplus \oR^{>0}$ and  $R^0=\oR^0$ we get
\begin{align*}
\eps_\k(R) &= \dim_\k \Ker(R\to \oR )+\dim_\k(R_0),\\
                  & = \dim_\k \Ker(R\to  \oR^{>0} )\\
\delta_\k (R) &=  \dim_\k \overline R^{>0}/R^{>0} - \eps_\k(R)\\
                 &= \dim_\k \Coker(R \to \oR^{>0})  - \dim_\k \Ker(R\to \oR^{>0} ).
 \end{align*}

\begin{lemma} \label{lem.euler}
With the \ref{not.>} assumptions of Definition \ref{def.deltaglob}
 consider the $2$--term complex with $R$ in degree 0,
$$R^\bullet : 0\to R \to \oR^{>0}\to 0.$$
Then 
\begin{align*}
\del_\k(R)
= - \chi_{_\k} (R^\bullet),
\end{align*}
where $\chi_{_{\k}} (L^\bullet):= \sum_i  (-1)^i \dim_\k H^i(L^\bullet)$ for a complex $L^\bullet$ of $\k$-modules with finite dimensional cohomology.
\end{lemma}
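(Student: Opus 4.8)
The plan is to read off the cohomology of the two-term complex $R^\bullet : 0 \to R \to \oR^{>0} \to 0$ directly and to match it against the formulas for $\eps_\k(R)$ and $\delta_\k(R)$ recorded just before the statement. With $R$ placed in degree $0$ and $\oR^{>0}$ in degree $1$, the only nonzero differential is the composite $R \twoheadrightarrow \Rr \hookrightarrow \oR \twoheadrightarrow \oR^{>0}$, i.e.\ the normalization map followed by the projection onto the positive-dimensional summand in $\oR = \oR^0 \oplus \oR^{>0}$. Hence
$$H^0(R^\bullet) = Ker(R \to \oR^{>0}), \qquad H^1(R^\bullet) = Coker(R \to \oR^{>0}),$$
and $H^i(R^\bullet) = 0$ for $i \neq 0,1$.

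First I would check that $\chi_{_\k}(R^\bullet)$ is defined, that is, that both cohomology groups are finite-dimensional over $\k$; this is exactly what the hypotheses of Definition \ref{def.deltaglob} guarantee. The displayed identity $\eps_\k(R) = \dim_\k Ker(R \to \oR^{>0})$ shows that $H^0$ is finite-dimensional, and the identity $\delta_\k(R) = \dim_\k Coker(R \to \oR^{>0}) - \eps_\k(R)$, together with the finiteness of $\delta_\k(R)$ and $\eps_\k(R)$, shows that $\dim_\k H^1 = \delta_\k(R) + \eps_\k(R) < \infty$.

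With these two identifications the computation is immediate:
$$\chi_{_\k}(R^\bullet) = \dim_\k H^0(R^\bullet) - \dim_\k H^1(R^\bullet) = \eps_\k(R) - \big(\delta_\k(R) + \eps_\k(R)\big) = -\delta_\k(R),$$
which is the assertion $\delta_\k(R) = -\chi_{_\k}(R^\bullet)$.

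The hard part, such as it is, is only a bookkeeping matter rather than a genuine obstacle: the differential must be taken into $\oR^{>0}$ and not into $\oR$, so that the isolated zero-dimensional components are discarded on the target. This is precisely what arranges for $Ker$ to compute $\eps_\k$ (the length of the embedded components together with the number of isolated points) and for $Coker$ to compute $\delta_\k + \eps_\k$, via the decomposition $\oR = \oR^0 \oplus \oR^{>0}$ with $R^0 = \oR^0$. Once this decomposition is invoked, everything reduces to the displayed formulas preceding the lemma and no further argument is needed.
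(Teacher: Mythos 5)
Your proposal is correct and is exactly the argument the paper intends: the displayed identities $\eps_\k(R)=\dim_\k Ker(R\to\oR^{>0})$ and $\delta_\k(R)=\dim_\k Coker(R\to\oR^{>0})-\dim_\k Ker(R\to\oR^{>0})$ immediately preceding the lemma are its entire proof, with $H^0=Ker$ and $H^1=Coker$ of the two-term complex. Your finiteness check and the remark about mapping into $\oR^{>0}$ rather than $\oR$ are exactly the right bookkeeping points.
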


The following technical lemma compares $\del$ and $\eps$ of $R$ with that of a finite modification of $R$ whose positive dimensional part is a partial normalization of $R^{>0}$. It is a key lemma for the semicontinuity of $\del$.

\begin{lemma}\label{lem.diff}  
Let $R$ be a  $\k$-algebra with  $\oR$
finite over $R$, having only finitely many isolated non-normal singularities, with residue fields finite over $\k$.
 Consider a finite morphism of $\k$-algebras
  $\mu: R \to \wR$. 
Let  $N\subset \Spec R$ be a finite set of closed points with residue fields finite over $\k$, such that 
$\Spec \mu$ is an isomorphism over $ \Spec R\sm N$.

 Then the positive dimensional parts $R^{>0}$ and $\wR^{>0}$ have the same normalization and $\mu$ satisfies
$$
\begin{array}{cllllll}
\dim_\k \Coker(\mu) - \dim_\k \Ker(\mu)& = &\delta_\k(R) - \delta_\k(\wR)\\
& = & \varepsilon_\k(\wR)-\varepsilon_\k(R)+\dim_\k \Coker(\mu^{>0})
\end{array}
$$
with $\mu^{>0}: R^{>0} \to \wR^{>0}$ the induced map. $\mu^{>0}$ is finite and injective and a partial normalization\footnote{\,Let $\nu: R \to \oR$ be the normalization of $R$. A  {\em partial normalization} of $R$  is a birational morphism $\mu :R \to \wR$ such that $\nu= \tilde \nu \circ \mu:R\to \wR \to \oR$, with $\tilde \nu$ the normalization of $\wR$.} 
 of the reduced positive dimensional part of $R$\,\footnote{\, In the case $R^{>0}=0$, i.e. $X^{>0}=\emptyset$, the statements here and in the following are to be interpreted accordingly, e.g. with $\del_\k(R^{>0})=0$ and $\eps_\k(R^{>0})=0$.} and all numbers are finite. 
\end{lemma}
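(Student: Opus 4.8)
The plan is to reduce both displayed identities to an Euler-characteristic bookkeeping based on Lemma~\ref{lem.euler}, organised around the six-term kernel--cokernel exact sequence of a composite; the first task is to pin down the structural claims about $\mu^{>0}$ and the common normalization. Since $\mu$ is finite and $\Spec\mu$ is an isomorphism over $\Spec R\sm N$ with $N$ a finite set of closed points, removing $N$ leaves the generic points untouched, so the positive-dimensional irreducible components of $R$ and of $\wR$ correspond bijectively with equal function fields. Thus $\mu$ induces a finite map $\mu^{>0}:R^{>0}\to\wR^{>0}$ of reduced positive-dimensional parts that is an isomorphism at every generic point, hence injective and birational, and $R^{>0},\wR^{>0}$ acquire the same total quotient ring and the same normalization $\oR^{>0}$. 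The factorization $R^{>0}\hookrightarrow\wR^{>0}\hookrightarrow\oR^{>0}$ of the normalization of $R^{>0}$ then exhibits $\mu^{>0}$ as a partial normalization. Because $\oR^{>0}$ is finite over $\wR^{>0}$ and the zero-dimensional part $\wR^0$ is finite over $\wR$, the ring $\overline{\wR}=\wR^0\oplus\oR^{>0}$ is finite over $\wR$; and as $\wR$ is isomorphic to $R$ off $N$ it has only finitely many non-normal points. Hence $\wR$ satisfies the hypotheses of Definition~\ref{def.deltaglob}, so $\del_\k(\wR)$ is defined and finite by Lemma~\ref{lem.artin}.

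Next I would record finiteness of the remaining quantities. As $\mu$ is finite, $\wR$ is a finite $R$-module, so $Ker(\mu)\subset R$ and $Coker(\mu)=\wR/\mu(R)$ are finite $R$-modules supported on $N$, hence of finite length; since the residue fields on $N$ are finite over $\k$ they have finite $\k$-dimension by Lemma~\ref{lem.fields}. The same applies to $Coker(\mu^{>0})=\wR^{>0}/R^{>0}$, while $\eps_\k(R),\eps_\k(\wR)$ are finite by Remark~\ref{rm.eps}(1) and $\del_\k(R),\del_\k(\wR)$ by Lemma~\ref{lem.artin}. For the first equality I would write $\alpha:R\to\oR^{>0}$ and $\beta:\wR\to\oR^{>0}$ for the natural maps, so that $\alpha=\beta\circ\mu$, and invoke the kernel--cokernel exact sequence of this composite,
\[
0\to Ker(\mu)\to Ker(\alpha)\to Ker(\beta)\to Coker(\mu)\to Coker(\alpha)\to Coker(\beta)\to 0 .
\]
Taking the alternating sum of $\dim_\k$ (all terms finite by the previous step) and inserting $\del_\k(R)=\dim_\k Coker(\alpha)-\dim_\k Ker(\alpha)$ and $\del_\k(\wR)=\dim_\k Coker(\beta)-\dim_\k Ker(\beta)$ (the displayed identities preceding Lemma~\ref{lem.euler}, valid for $\wR$ since its positive-dimensional normalization is again $\oR^{>0}$) yields $\dim_\k Coker(\mu)-\dim_\k Ker(\mu)=\del_\k(R)-\del_\k(\wR)$.

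For the second equality I would apply the formula $\del_\k(\,\cdot\,)=\dim_\k\bigl(\oR^{>0}/(\cdot)^{>0}\bigr)-\eps_\k(\cdot)$ to $R$ and to $\wR$ and subtract, then use the exact sequence $0\to\wR^{>0}/R^{>0}\to\oR^{>0}/R^{>0}\to\oR^{>0}/\wR^{>0}\to 0$ arising from $R^{>0}\subseteq\wR^{>0}\subseteq\oR^{>0}$. This gives $\dim_\k(\oR^{>0}/R^{>0})-\dim_\k(\oR^{>0}/\wR^{>0})=\dim_\k(\wR^{>0}/R^{>0})=\dim_\k Coker(\mu^{>0})$, whence $\del_\k(R)-\del_\k(\wR)=\eps_\k(\wR)-\eps_\k(R)+\dim_\k Coker(\mu^{>0})$.

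The step I expect to be the main obstacle is the first one: making rigorous that a finite morphism which is an isomorphism off a finite set of closed points induces a well-defined injective finite map $\mu^{>0}:R^{>0}\to\wR^{>0}$ compatible with the splitting into positive- and zero-dimensional parts, and that the two positive-dimensional reduced parts really have the identical normalization $\oR^{>0}$ (so that $\alpha$ and $\beta$ land in the \emph{same} target, which is what makes the six-term sequence applicable). Once this structural comparison is secured, both equalities are formal consequences of the six-term sequence and additivity of length.
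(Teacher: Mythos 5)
Your proposal is correct and follows essentially the same route as the paper: your six-term kernel--cokernel sequence for the composite $R\to\wR\to\oR^{>0}$ is exactly the cohomology sequence of the paper's exact sequence of two-term complexes $0\to K^\bullet\to R^\bullet\to\wR^\bullet\to C^\bullet\to 0$, and the Euler-characteristic bookkeeping via Lemma~\ref{lem.euler} is identical. The derivation of the second equality from $\delta_\k(\cdot)=\delta_\k((\cdot)^{>0})-\eps_\k(\cdot)$ and the inclusions $R^{>0}\hookrightarrow\wR^{>0}\hookrightarrow\oR^{>0}$, as well as the identification of $\mu^{>0}$ as a finite injective partial normalization via the factorization of $\nu^{>0}$, also match the paper's argument.
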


\begin{proof} By assumption $R$ has only finitely many non-normal singularities with $\del_\k(R)$ and $\eps_\k(R)$ finite (Lemma \ref{lem.artin}). Since 
$\mu$ is an isomorphism outside finitely many closed points, $\Ker (\mu)$ and $\Coker(\mu)$ are Artinian. Then the kernel and cokernel of $\mu$ and of $\mu^{>0}$ are finite over $\k$  (Lemma \ref{lem.fields}).

We have  $\Spec \widetilde{R}^{red}=\Spec \widetilde{R}^{>0} \,\cup$ \{finitely many isolated points\} and 
 the restriction of $\mu$  induces a birational\,\footnote{\,A morphism of 
schemes is {\em birational} if it is a bijection between the generic points and an isomorphism of the corresponding local rings. A morphism of rings is birational if this holds for the corresponding morphism of schemes.}
 morphism
$R^{>0} \to \wR^{>0}$, since it is an isomorphism outside finitely many closed points.
Let $\nu^{>0}:R^{>0} \to \oR^{>0}$ be the normalization of $R^{>0}$. 
By \cite[Lemma 28.52.5 (3), tag 035Q]{Stack} $\nu^{>0}$ factors as
 $\nu^{>0}=\tilde \nu\circ \mu^{>0}: R^{>0} \to \wR^{>0} \to \oR^{>0}$ 
with $\tilde \nu: \wR^{>0} \to \oR^{>0}$ the normalization of $\wR^{>0}$
and $\tilde \nu$ finite. Hence $ \mu^{>0}$ is a partial normalization. It is finite since $\mu$ is finite and injective since $R^{>0}$ is reduced.

 Now consider the $2$--term complexes (with $R$ resp. $\wR$ in degree 0)
\[
\begin{array}{l}
R^\bullet : 0\to R \to \oR^{>0}\to 0 \, , \\
\wR^\bullet : 0\to \wR \to \oR^{>0}\to 0
\end{array}
\]
and the morphism of complexes $\mu^\bullet: R^\bullet\to \wR^\bullet$ with 
$\mu^0=\mu$ and  the identity in degree 1.
Let $K^\bullet$ resp. $C^\bullet$ be the $1$--term complexes $\Ker(\mu)$ resp. $\Coker (\mu)$, concentrated in degree $0$. Then we have the exact sequence of complexes
\[
0\to K^\bullet\to R^\bullet\to \wR^\bullet\to C^\bullet\to 0\, .
\]
Taking Euler characteristics
we get (by Lemma \ref{lem.euler})
$$\dim_\k \Coker(\mu)-\dim_\k \Ker(\mu) =
\chi_{_\k}(\wR^\bullet)-\chi_{_\k}(R^\bullet)=\delta_\k(R)-\delta_\k(\wR)$$ 
showing the first equality. 
Since $\delta_\k(R)=\delta_\k(R^{>0})-\varepsilon_\k(R)$ we get
\[
\delta_\k(R)-\delta_\k(\wR)=\varepsilon_\k(\wR)-\varepsilon_\k(R)+\delta_\k(R^{>0})-\delta_\k(\wR^{>0}).
\]
From this and from the inclusions $R^{>0} \hookrightarrow \wR^{>0} \hookrightarrow \oR^{>0}$ the second equality follows.
\end {proof}

\begin{remark}\label{rm.nagata}
{\em \begin{enumerate}
\item  For all results of this paper we have to assume that $\oR$ is (module-) finite over $R$. Integral domains that satisfy this conditions are called {\em N-1 rings}. An {\em N-2 ring} (or {\em Japanese ring})  is
an integral domain  $R$ such for every finite field extension $L$ of $Q(R)$
the integral closure of $R$ in $L$ is finite over $R$.
$R$ is a {\em Nagata ring} if $R$ is Noetherian and for every prime ideal $\fp$ the ring $R/\fp$ is N-2 (see \cite[Lemma 10.157.2, tag 03GH]{Stack}). 
Hence $\oR$ is finite over $R$ if $R$ is Nagata.
\smallskip

\item  $R$ is  Nagata iff (cf. \cite [1.4.3]{CL06})\\
 (a) for every maximal ideal $\fn$ of $R$ the canonical map $R_\fn \to \widehat R_\fn$ from the local ring $R_\fn $ to its completion is reduced (flat with reduced fibers) and\\ 
 (b) for every reduced finitely generated $R$-algebra $R'$ the set of normal points is open and dense in $\Spec R'$.

Condition (b) is implied by (a) if $R$ is semi-local.
For further properties of Nagata rings we refer to \cite[Section 10.157, tag 032E]{Stack}. 
\smallskip

\item  Examples of Nagata rings are:
\begin{enumerate}
\item fields, $\Z$,  complete local Noetherian  rings,
\item Dedekind domains with perfect\footnote{\,A field $\k$ is perfect if $\k$ is of characteristic 0 or of characteristic $p>0$ and every element has a $p$-th root (e.g. if $\k$ is finite).}
fraction field\footnote{\,This statement is formulated in \cite{Stack} only for Dedekind domains with fraction fields of characteristic 0, but the proof works  for perfect fields of positive characteristic as well.}, 
\item finite type ring extensions of any of the above, \\
(for (a) (b) (c) see \cite[Proposition 10.157.16, tag 0335]{Stack}),
\item quasi-excellent, in particular excellent rings (e.g. analytic local rings),
 (\cite[Lemma 15.51.5, tag 07QV]{Stack}), 
\item localizations of a Nagata ring 
(\cite[Lemma 10.157.6, tag 032U]{Stack}),
\item $A$-algebras (essentially) of finite type over a  Nagata ring $A$ 
(\cite[Proposition 10.157.16, tag 0335]{Stack}),
\item  $ A[[x_1,...,x_n]]$  is Nagata if $A$ is Nagata,
(\cite[Appendix A, Property PSEP]{KS19}).
\end{enumerate}
A scheme $X $ is called Nagata if for every $x \in X$ there exists an affine open neighbourhood  $U \subset X$ of $x$ such that the ring $\ko_X(U)$ is Nagata. Note that there are discrete valuation rings that are not Nagata (\cite[Example 10.157.17, tag 09E1]{Stack}).
\end{enumerate}
}
\end{remark}

\section{Semicontinuity of the Delta Invariant}\label{sec.2}
We consider now families of isolated non-normal singularities over a principal ideal domain. Recall that a principal ideal domain $A$ is a regular 1-dimensional domain with $A_\fp$ a discrete valuation ring for $\fp \in \Spec A$ and with $\fp$ a maximal ideal for $\fp\neq \langle 0 \rangle$. For us the most important examples are $\Z$ and $\k[t]$.
The following Proposition \ref{prop.fibers} is fundamental for the semicontinuity results of this paper.

\begin{proposition}\label{prop.fibers} 
 Let $\vp: A\to R$ be a flat morphism of  rings with  $A$ a principal ideal domain and
 $\mu: R \to \wR$ a finite morphism of $A$-algebras.
Assume that
\begin{enumerate}
\item the composition $\widetilde \vp:= \mu\circ \vp: A \to \wR$ is flat,
\item  $\Ker(\mu)$ and $\Coker(\mu)$ are finite over $A$,
\item the normalization $\overline {R(\fq)}$
is finite over $R(\fq)$ and  the residue fields at the non-normal points of $R(\fq)$ are finite over $k(\fq)$ for $\fq \in  Im (\Spec \vp)$.
\end{enumerate}
 Then, for $\fp \in Im (\Spec \vp)$ there exists an open neighborhood $U\subset \Spec A$ of $\fp$ such that for $\fq \in U\cap  Im (\Spec \vp)$ the following holds:
 \[
\begin{array}{clll}
(i) & \delta_{k(\fp)}(R(\fp))-\delta_{k(\fq)}(R(\fq))=\delta_{k(\fp)}(\wR(\fp))-\delta_{k(\fq)}(\wR(\fq)).\\
(ii) & \eps_{k(\fp)}(R(\fp))-\eps_{k(\fq)}(R(\fq)) = \eps_{k(\fp)}(\wR(\fp))-\eps_{k(\fq)}(\wR(\fq))\\
&+\dim_{k(\fp)} \Coker (\mu(\fp)^{>0})- \dim_{k(\fq)} \Coker (\mu(\fq)^{>0}).\\
(iii) &  \text{If } \Spec \wR^1 \cap \Spec \wR^{>1} = \emptyset \text{ (e.g. if $\wR =\oR$ or $\wR^1 =0$), then}\\
 & \eps_{k(\fp)}(R(\fp))-\eps_{k(\fq)}(R(\fq)) = \eps_{k(\fp)}(\wR(\fp))-\eps_{k(\fq)}(\wR(\fq))\\ 
 &+\dim_{k(\fp)}(\wR^{>1}/ \mu(R^{>1}))\otimes_A k(\fp)- \dim_{k(\fq)}(\wR^{>1}/ \mu(R^{>1}))\otimes_A k(\fq)\\
 &\geq \eps_{k(\fp)}(\wR(\fp))-\eps_{k(\fq)}(\wR(\fq)).\\
(iv) & \text{If } \Ker(\mu)=0 \text{ then } \Ker(\mu(\fq): R(\fq)\to \wR(\fq)) =0 \text{ for } \fq \neq \fp. \\
 \end{array}\\
\]
Here   $\wR(\fq)= \wR \otimes_Ak(\fq)$ and  $\mu(\fq)^{>0}: R(\fq)^{>0} \to \wR(\fp)^{>0}$ is the induced map of positive dimensional parts, which is a partial normalization of $R(\fq)^{>0}$.
\end{proposition}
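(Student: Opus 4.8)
\emph{Plan.} The strategy is to apply the two equalities of Lemma \ref{lem.diff} fibrewise, to each base change $\mu(\fq)\colon R(\fq)\to \wR(\fq)$, and then to control how the resulting numbers vary with $\fq$. First I would record that, for every $\fq\in Im(\Spec\vp)$, the fibre map $\mu(\fq)$ is finite (finiteness is stable under base change) and an isomorphism away from finitely many closed points: indeed $Coker(\mu(\fq))$ and $Ker(\mu(\fq))$ are finite over $k(\fq)$, as verified below, hence are Artinian $R(\fq)$-modules supported in the non-normal locus, where assumption (3) provides residue fields finite over $k(\fq)$. The fibres $R(\fq),\wR(\fq)$ are therefore $k(\fq)$-algebras with finite normalization and finitely many isolated non-normal singularities, so Lemma \ref{lem.diff} applies and gives, for each such $\fq$,
\[
\dim_{k(\fq)}Coker(\mu(\fq))-\dim_{k(\fq)}Ker(\mu(\fq))=\delta_{k(\fq)}(R(\fq))-\delta_{k(\fq)}(\wR(\fq)),
\]
together with the second ($\eps$-)equality of that lemma. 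Everything now reduces to showing that this left-hand Euler characteristic is independent of $\fq$.

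The homological input is to view $\mu$ as the $2$-term complex $C^\bullet\colon R\to\wR$ of $A$-modules, with $R$ in degree $0$. By hypothesis (1) and the flatness of $\vp$, both $R$ and $\wR$ are flat, hence torsion-free, over the PID $A$; the cohomology of $C^\bullet$ is $Ker(\mu)$ and $Coker(\mu)$, finite over $A$ by (2). Since $A$ has global dimension $1$ and $\operatorname{Tor}_1^A(R,-)=\operatorname{Tor}_1^A(\wR,-)=0$, the universal-coefficient sequences for $C^\bullet\otimes_A k(\fq)$ read
\[
0\to Ker(\mu)\otimes_A k(\fq)\to Ker(\mu(\fq))\to \operatorname{Tor}_1^A(Coker(\mu),k(\fq))\to 0,\qquad Coker(\mu(\fq))\cong Coker(\mu)\otimes_A k(\fq).
\]
I would derive these by splitting $\mu$ into $0\to Ker(\mu)\to R\to Im(\mu)\to 0$ and $0\to Im(\mu)\to\wR\to Coker(\mu)\to 0$, tensoring with $k(\fq)$, and using $\operatorname{Tor}_1^A(Im(\mu),k(\fq))=0$ (as $\operatorname{Tor}_2^A=0$ over a PID). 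In particular both fibre modules are finite over $k(\fq)$, as asserted above.

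The crucial observation is that $Ker(\mu)$, being a finitely generated torsion-free submodule of the flat $A$-module $R$, is \emph{free} over the PID $A$; hence $\dim_{k(\fq)}(Ker(\mu)\otimes_A k(\fq))=\rank_A Ker(\mu)$ is constant and $\operatorname{Tor}_1^A(Ker(\mu),-)=0$. Combined with the elementary identity $\dim_{k(\fq)}(M\otimes_A k(\fq))-\dim_{k(\fq)}\operatorname{Tor}_1^A(M,k(\fq))=\rank_A M$, valid for any finite $A$-module $M$ over a PID (check it on $A^r$ and on cyclic torsion summands), the universal-coefficient sequences give
\[
\dim_{k(\fq)}Coker(\mu(\fq))-\dim_{k(\fq)}Ker(\mu(\fq))=\rank_A Coker(\mu)-\rank_A Ker(\mu),
\]
manifestly independent of $\fq$. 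Feeding this into the first equality of Lemma \ref{lem.diff} shows $\delta_{k(\fq)}(R(\fq))-\delta_{k(\fq)}(\wR(\fq))$ is constant, which is (i) (in fact for all $\fq$ in the image); feeding it into the second ($\eps$-)equality and subtracting the values at $\fp$ and at $\fq$ yields (ii).

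For (iv) the first universal-coefficient sequence degenerates to $Ker(\mu(\fq))\cong\operatorname{Tor}_1^A(Coker(\mu),k(\fq))$ when $Ker(\mu)=0$; writing $Coker(\mu)=A^b\oplus T$ with $T$ torsion supported at finitely many primes, I would take $U$ to be the complement of those primes other than $\fp$, so that $\operatorname{Tor}_1^A(Coker(\mu),k(\fq))=0$ and hence $Ker(\mu(\fq))=0$ for every $\fq\in U\sm\{\fp\}$; this is the step that genuinely forces the passage to a neighborhood. For (iii), under the disjointness $\Spec\wR^1\cap\Spec\wR^{>1}=\emptyset$ I would identify the correction term $\dim_{k(\fq)}Coker(\mu(\fq)^{>0})$ of (ii) with $\dim_{k(\fq)}(\wR^{>1}/\mu(R^{>1}))\otimes_A k(\fq)$: disjointness makes the $1$-dimensional part of the fibre contribute trivially and lets the formation of the positive-dimensional part commute with the base change on the $>1$ part, whereupon right-exactness of $\otimes$ identifies the fibre cokernel with $(\wR^{>1}/\mu(R^{>1}))\otimes_A k(\fq)$. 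The final inequality is then upper semicontinuity of fibre dimension over the PID: after shrinking $U$ to avoid the finite torsion-support of $\wR^{>1}/\mu(R^{>1})$, the value at points $\fq\neq\fp$ equals the rank and is minimal, so $\dim_{k(\fp)}(\cdots)\otimes_A k(\fp)\ge \dim_{k(\fq)}(\cdots)\otimes_A k(\fq)$. I expect the main obstacle to be exactly this last identification in (iii)\,---\,showing that passing to the $>1$ part commutes with $-\otimes_A k(\fq)$ under the disjointness hypothesis\,---\,whereas the constancy underlying (i) and (ii) rests cleanly on the freeness of $Ker(\mu)$.
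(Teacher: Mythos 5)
Your proposal is correct and follows essentially the same route as the paper: both reduce all four statements to Lemma \ref{lem.diff} applied fibrewise together with the constancy of $\dim_{k(\fq)}Coker(\mu(\fq))-\dim_{k(\fq)}Ker(\mu(\fq))$, which in both cases rests on the free-plus-torsion decomposition of $Coker(\mu)$ and the freeness of the torsion-free $A$-module $Ker(\mu)$ over the PID, and your handling of (iii) (identification of the correction term plus semicontinuity of fibre dimension) and of (iv) (vanishing of the torsion contribution off a finite set) matches the paper's. The only cosmetic difference is that you extract the fibre kernel and cokernel from universal-coefficient/Tor sequences, whereas the paper applies the snake lemma to multiplication by a generator of $\fq$; since $\operatorname{Tor}_1^A(M,A/\langle t\rangle)=\ker(t\colon M\to M)$ these are the same computation, though your version treats the generic point uniformly with the closed points.
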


\begin{proof} 

We set
\[
\begin{array}{lcl}
\kn: & = & \Ker (\mu: R\to \wR) ,\\
\km: & = & \Coker (\mu: R\to \wR) .
\end{array}
\]
Both $R$-modules are finitely generated $A$-modules by assumption and hence $\kn(\fq)=\kn\otimes_A k(\fq)$ and $\km(\fq)=\km\otimes_A k(\fq)$ are finite dimensional vector spaces over $k(\fq)$ for $\fq \in \Spec A$. Then they are Artinian $R(\fq)$-modules with 
$$N(\fq) := \Supp_{R(\fq)} \kn(\fq) \cup \Supp_{R(\fq)} \km(\fq)$$
 a finite set of closed points of $R(\fq)$. 
 The set
$N:= \Supp_R \kn \cup \Supp_R \km$
is closed in $\Spec R$ with  $ N(\fq) = \{\fn \in N | \fn\cap A=\fq \} = N\cap \Spec R(\fq)$.
Since  $\Spec \mu: \Spec \wR \sm \mu^{-1}(N) \to \Spec R \sm N$
is an isomorphism,
the fiber map $\Spec \mu(\fq): \Spec \wR(\fq) \to \Spec R(\fq)$  is an isomorphism over  $\Spec R(\fq) \smallsetminus N(\fq)$  with
 $\mu(\fq)^{-1}(N(\fq))$
a finite set of closed points \cite[Lemma 36.39.1.,tag 02LS]{Stack}  (since $\mu$ and hence $\mu(\fq)$ is finite).
It follows that  the assumptions of Lemma \ref{lem.diff} are satisfied for $\mu(\fq): R(\fq) \to \wR(\fq)$ and $\k =k(\fq)$. 
In particular, $\mu(\fq): R(\fq)^{>0} \to \wR(\fq)^{>0}$ is a partial normalization of $ R(\fq)^{>0}$.

Let $\fq \in Im(\Spec \vp)$ be non-zero. Since $A$ is principal, $\fq$ is a maximal ideal, generated by one element $t_\fq \in A$. We denote the image of  $t_\fq$ in $R$ resp. $\wR$ by $f_\fq$ resp. $\tilde f_\fq$, which are non-zero divisors since $R$ and
$\wR$ are flat over $A$.
Consider  the commutative diagram 
\[
\xymatrix{0 \ar[r] & R\ar[d]^\mu\ar[r]^{f_\fq} & R\ar[r]\ar[d]^\mu & R(\fq)\ar[r]\ar[d]^{\mu(\fq)}\ar[r] & 0\\
0\ar[r] & \wR\ar[r]^{\tilde f_\fq} &\wR\ar[r] &  \wR(\fq)\ar[r] & 0,
}
\]
with exact rows. 
 Since $A$ is a principal ideal domain 
we have a decomposition
\[
\km=\kf\oplus\kt
\]
with $\kf$ a free $A$--module and $\kt$ an $A$--torsion submodule concentrated on finitely many maximal ideals in $A$.
$\kn$ is  a free $A$-module (since it is torsion free as a submodule of the flat, hence torsion free $A$-module $R$). Since $\kf$ and $\kn$ are free, 
they are of constant rank $m$ and $n$ respectively and  we get for every $\fq \in Im(\Spec \vp)$, 
\[
m= \dim_{k(q)}\kf(\fq), \ n= \dim_{k(q)}\kn(\fq).
\]

Now fix a  $\langle 0 \rangle \neq \fp \in Im(\Spec \vp)$. There exists an open neighbourhood $U$ of $\fp$ in $\Spec A$ such that $\kt_\fq=0$ for $\fq \in U \smallsetminus \{\fp\}$ and hence $\dim_{k(\fp)}\kt_\fp<\infty$. The snake lemma, applied to the diagram above, gives for $\fq \in U$ the exact sequence
\[
0 \to \kn \xrightarrow{f_\fq} \kn \to \Ker(\mu(\fq)) \to \km \xrightarrow{\tilde{f_\fq}} \km\to \Coker (\mu (\fq))\to 0,\\
\]
and from this we get
\[
\begin{array}{c}
0\to \kn(\fq)\to \Ker(\mu(\fq))\to \Ker(\tilde f_\fq)\to 0,\\[0.5ex]
0\to \Ker(\tilde f_\fq)\to \kf \oplus \kt \xrightarrow{\tilde f_\fq}
\kf \oplus \kt \to \Coker(\mu(\fq))\to 0\ .
\end{array}
\]
$\tilde f_\fq$ respects the decomposition into free and  torsion part, with $\Ker(\tilde f_\fq|\kf)=0$ and $\Coker(\tilde f_\fq|\kf)=\kf(\fq)$.
Since $\kt$ is finite dimensional, kernel and cokernel of $\tilde f_\fq: \kt \to \kt$ have the same dimension for each $\fq \in U$ (being 0 for $\fq \neq \fp$). 

If $\kn=0$ then $\Ker(\mu(\fq)) = \Ker(\tilde f_\fq)$ and  $=0$ for $\fq \neq \fp$ since $\kt_\fq =0$ and statement (iv) follows. 
\medskip

By Lemma \ref{lem.diff} 
 $\Coker(\mu(\fq))$ and $\Ker(\mu(\fq))$ are finite dimensional over $k(\fq)$ and we get 
\[
\begin{array}{ll}
m &= \dim_{k(\fq)} \Coker(\mu(\fq))-\dim_{k(\fq)} \Ker(\tilde f_\fq)\\
&= \dim_{k(\fq)} \Coker(\mu(\fq)) - \dim_{k(\fq)} \Ker(\mu(\fq)) +n.
\end{array}
\]
It follows that
$ \dim_{k(\fq)} \Coker(\mu(\fq)) - \dim_{k(\fq)} \Ker(\mu(\fq)) = m-n$
is independent $\fq \in U \sm  \langle 0 \rangle $. The same holds for $\fq =\langle 0 \rangle$ since $\kt(\fq) = 0$ and hence $\Coker(\mu(\fq)) = 
\kf(\fq)$ and $\Ker(\mu(\fq))=\kn(\fq)$.
Lemma \ref{lem.diff} implies now statement (i) and (ii).
\medskip

To prove (iii) assume that $\Spec \wR^1 \cap \Spec \wR^{>1} = \emptyset$.
Then $\wR(\fp)^0=(\wR^1)(\fp)$ and $\wR(\fp)^{>0}=(\wR^{>1})(\fp)$
for $\fp \in Im (\Spec \vp)$ ($\vp$ is flat) and $\Coker (\mu(\fp)^{>0}) = (\wR^{>1}/ \mu(R^{>1}))\otimes_A k(\fp)$.
By assumption $\wR/ \mu(R)$ is a finite $A$-module and hence also $\wR^{>1}/ \mu(R^{>1})$. Thus
$\dim_{k(\fp)}(\wR^{>1}/ \mu(R^{>1}))\otimes k(\fp)$ is semicontinuous on $\Spec A$
(\cite[Lemma 1]{GP20}), which proves 
$\dim_{k(\fp)} \Coker (\mu(\fp)^{>0}) \geq \dim_{k(\fq)} \Coker (\mu(\fq)^{>0})$
and hence (iii).
We notice, that if $\wR^{>1}=0$ the proof gives 
$ \eps_{k(\fp)}(R(\fp))-\eps_{k(\fq)}(R(\fq)) = \eps_{k(\fp)}(\wR(\fp))-\eps_{k(\fq)}(\wR(\fq))$.
\end {proof}

\begin{lemma}\label{lem.flat}
 Let $\vp: A \to R$  be a morphism of rings with $A$  a principal ideal domain. Let the normalization $\nu: R \to \oR$ be finite and
 $\mu :R \to \wR$ be a finite morphism, 
  which is a partial normalization of $R$. 
 \begin{enumerate}
 \item Let $\vp$ be flat.
 \begin{enumerate}
 \item  [(i)] Let $Q$ be the (non-empty) intersection of some associated primes of $R$ and set $R':=R/Q$. Then the induced map  $\vp ': A \to R'$ is flat. In particular $\vp^{red}: A \to R^{red}$ is flat.
  \item [(ii)] The map $\tilde{\vp} = \mu \circ \vp : A \to \wR$ is flat  
if $\wR$ is reduced. In particular, $\ol{\vp} = \nu \circ \vp : A \to \oR$ is flat.
  \end{enumerate}
\item  Let $R$ and $\wR$ be reduced. Then $\vp$ is flat $\iff$ 
 $\tilde{\vp}$ is flat.
   \item Let $\fn \in \Spec R, \, \fp = \fn \cap A$ and $\vp: A_\fp \to R_\fn$ flat.     
 \begin{enumerate}  
 \item [(i)]  If $R_\fn(\fp)= R_\fn \otimes_{A_\fp} k(\fp)$ is reduced, then $R_\fn$ is reduced.
  
\item [(ii)]  If $\dim R_\fn \geq 2$ and $R_\fn(\fp)$ reduced, then 
{\em depth}$(R_\fn)\geq 2$ and $r_1(R_\fn) = 0$  (i.e., no 1-dimensional irreducible component of $\Spec R$ passes through $\fn$). 
\item  [(iii)] If  $\dim R_\fn \geq 2$ and $R_\fn(\fp)$ an INNS then: 
 $R_\fn(\fp)$ is reduced $\iff$ {\em depth}$(R_\fn) \geq 2$.
\end{enumerate}
 
 \item Let $\vp$ be flat and $\dim R/\fn \geq 2$ for every minimal prime $\fn$ of $R$. Assume that $R(\fp)$ has only isolated non-normal singularities for $\fp \in Im(\Spec \vp)$. 
 Then the following are equivalent:
 \begin{enumerate} 
\item [(i)] $R$ is reduced, 
\item [(ii)] for each $\fp \in  Im(\Spec \vp)$, $R(\fp)$ is reduced at all normal closed points of $R$.
 \end{enumerate}
 \end{enumerate}
\end{lemma}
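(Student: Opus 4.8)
The plan is to combine Serre's criterion $(R_0)+(S_1)$ with the pointwise statements in part (3), reducing everything to the local rings at closed points and exploiting throughout that $A$ is a PID. First I would record the structural consequences of flatness over $A$: since $\vp$ is flat, $R$ is torsion-free over $A$, so every nonzero element of $A$ maps to a nonzerodivisor in $R$; hence no associated prime of $R$ can meet $A\smallsetminus\{0\}$, and every $\fP\in\operatorname{Ass}(R)$ contracts to $\langle 0\rangle$. Combined with the hypothesis $\dim R/\fn\geq 2$ for all minimal primes $\fn$ and the flat dimension/depth formulas $\dim R_\fn=\dim A_\fp+\dim R_\fn(\fp)$ and $\operatorname{depth}R_\fn=\operatorname{depth}A_\fp+\operatorname{depth}R_\fn(\fp)$, this guarantees that the fibers carry no isolated points, so at every closed point $\fn$ over $\fp$ the local fiber $R_\fn(\fp)$ is either normal or a positive-dimensional INNS; this is exactly the range in which (3)(i)--(3)(iii) apply. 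Since reducedness is local, it suffices to test $R_\fn$ at all closed points, and since the minimal primes lie over $\langle 0\rangle$ the condition $(R_0)$ reduces to generic reducedness of the generic fibre $R(\langle 0\rangle)$, which holds because the minimal primes are the generic points of the $\geq 2$-dimensional (hence positive-dimensional) components, lying outside the finite non-normal locus of the fibre and thus normal there.

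For the direction (i)$\Rightarrow$(ii), I would argue as follows. Let $\fn$ be a normal closed point of $R$ lying over $\fp$. Then $R_\fn$ is a normal local ring, hence satisfies $(S_2)$, and since $\dim R_\fn\geq 2$ we get $\operatorname{depth}R_\fn\geq 2$. If $R_\fn(\fp)$ is normal it is reduced and we are done; otherwise $R_\fn(\fp)$ is an INNS with $\dim R_\fn\geq 2$, and part (3)(iii) gives that $R_\fn(\fp)$ is reduced if and only if $\operatorname{depth}R_\fn\geq 2$, which we have just verified. In either case $R(\fp)$ is reduced at $\fn$, giving (ii). Here the dimension-$\geq 2$ hypothesis is essential: it forces $\operatorname{depth}R_\fn\geq 2$ out of the $(S_2)$ coming from normality and removes the low-dimensional (isolated-point) pathologies where a normal $R_\fn$ could have a non-reduced fibre.

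For (ii)$\Rightarrow$(i), by the structural reductions it suffices to establish $(S_1)$, i.e. that $R_\fn$ is reduced at every closed point $\fn$. At a closed point where the fibre $R_\fn(\fp)$ happens to be reduced, part (3)(i) immediately yields that $R_\fn$ is reduced. The remaining closed points are those where $R_\fn(\fp)$ is a \emph{non-reduced} INNS; such an $\fn$ is necessarily a non-normal point of $R$, so (ii) says nothing about it directly, and this is where the argument must do real work. The plan is to pass to the generic fibre via the contraction-to-$\langle 0\rangle$ of associated primes: an embedded prime of $R$ would survive as an embedded point of $R(\langle 0\rangle)$, carried by a $1$-dimensional ``horizontal'' subscheme whose intersections with the fibres I would trace to a normal closed point of $R$, at which (ii) forces the fibre to be reduced, contradicting (3)(i). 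Using that $NNor$ of each fibre is finite (closed, by Remark \ref{rm.nnor}) keeps this localization argument finite.

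The hard part will be precisely this last step of (ii)$\Rightarrow$(i): bridging from the hypothesis ``fibres reduced at the \emph{normal} closed points'' to reducedness of $R_\fn$ at the \emph{non-normal} closed points, where the fibre itself may fail to be reduced. The delicate point is to show that any embedded component of $R$ must already be detected on the normal locus once one uses flatness (associated primes over $\langle 0\rangle$), the depth formula to exclude isolated fibre points, and the finiteness of each fibre's non-normal locus; controlling these isolated-point phenomena (exactly what the dimension-$\geq 2$ hypothesis is designed to prevent) is the main technical obstacle, and it is there that the PID structure of $A$ and the pointwise criteria (3)(i) and (3)(iii) must be orchestrated together.
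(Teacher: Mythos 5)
Your proposal addresses only part (4), taking (1)--(3) as black boxes, so most of the lemma is left unproved: the paper handles (1) and (2) by the elementary observation that, over a PID, flatness of $\vp$ is equivalent to every nonzero $\vp(a)$ being a nonzerodivisor (i.e.\ avoiding all associated primes of $R$), and (3) by combining Matsumura's corollary to Theorem 23.9 with Lemma \ref{lem.red} and the depth characterizations of $(S_1)$ and of $H^0_\fm(R)=\nil(R)$ for an INNS. Within part (4), your direction (i)$\Rightarrow$(ii) is essentially the paper's argument (normality gives $(S_2)$, hence $\mathrm{depth}(R_\fn)\geq 2$ since $\dim R_\fn\geq 2$, then (3)(iii)) and is fine.

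The genuine gap is (ii)$\Rightarrow$(i), which you yourself leave open as ``the hard part,'' and the route you sketch cannot close. You propose to detect an embedded associated prime $\fq$ of $R$ by tracing the horizontal subscheme $V(\fq)$ to a \emph{normal} closed point of $R$, where (ii) would force the fibre to be reduced and contradict (3)(i). But if $\fq$ is an associated prime of $R$, then $\fq R_\fn$ is an associated prime of $R_\fn$ for every $\fn\in V(\fq)$, so $R_\fn$ is non-reduced, hence non-normal, at \emph{every} point of $V(\fq)$; hypothesis (ii) is silent at all of these points and no contradiction can be extracted there. The paper's argument is entirely different and much more direct: a non-normal point $\fn$ of $R$ is an isolated non-normal (closed) point, so $\nil(R_\fn)$ is supported at $\fn$ alone and is therefore annihilated by a power of the image $f\in\fn$ of a generator $t$ of $\fp=\fn\cap A$; flatness over the PID makes $f$ and all its powers nonzerodivisors in $R_\fn$, whence $\nil(R_\fn)=0$. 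This annihilation-by-a-nonzerodivisor step is exactly what your proposal is missing, and it is where the PID and flatness hypotheses actually do the work.
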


\begin{proof} Since $A$ is a PID, $\vp$ is flat $\iff$ $\vp(a)$ is a non-zero divisor (n.z.d.) in $R$ for each $a\neq 0$ in $A$ $\iff$ $\vp(a)$ is not contained in any associated prime of $R$.

(1) The above characterization implies (i).
To see (ii), let $\nu= \tilde \nu \circ \mu:R\to \wR \to \oR$, with $\tilde \nu $ the normalization of $\wR$.
Consider first the case that $\mu =\nu: R \to \oR$ is the normalization of $R$.
Let $\fp_1,...,\fp_r$ be the minimal primes of $R$. Then $\oR = \oplus_i \ol{R/\fp_i}$ and $\bar\vp(a) = (b_1,...,b_r)$, with $b_i = \bar\vp(a \ mod \ \fp_i)$. Since $\vp^{red}$ is flat by (i), 
$\vp(a) \notin \fp_i$ for all $i$ and hence $b_i \neq 0$ for all $i$, showing that $\bar \vp(a)$ is a n.z.d. in $\oR$, i.e., $\bar \vp$ is flat. 
If $\wR$ is a partial normalization of $R$ and reduced, then $\wR \subset \oR$ and hence  $ \tilde{\vp} $ is flat.
  
(2) By (1) (ii) the flatness of $\vp$ implies that of $\tilde \vp$. The flatness of  $\tilde \vp$ implies that of  $\vp$ since $R$ is reduced and $R\subset \wR$.

(3) (i) By \cite[Corollary to Theorem 23.9]{Mat86} $R_\fn$ is reduced (normal),  if $ R_\fn \otimes_{A_\fp}k(\fq)$ is reduced (normal) for all $\fq \in \Spec A_\fp$. Since $A_\fp$ has only two prime ideals 
$R_\fn$ is reduced by the following Lemma \ref{lem.red}.

(ii) $R_\fn(\fp)$ is reduced iff it satisfies Serre's condition $(R_0)$ and $(S_1)$. \\
If $\dim R_\fn \geq 2$ and $R_\fn(\fp)$ reduced, then $\dim R_\fn(\fp) \geq 1$ and depth $R_\fn(\fp)\geq 1$ (from $(S_1)$).  Hence depth$(R_\fn) \geq 2$ since $\vp$ is flat (\cite[Corollary to Theorem 23.3]{Mat86}) and then $r_1(R_\fn) =0$ (\cite[Proposition 1.2.13]{BH98}). 

(iii) Implication $\Rightarrow$ follows direclty  from (ii). For the converse direction we note that $\nil(R) = H^0_{\fm}(R)$ for $(R,\fm)$  a local INNS of dimension $\geq 1$ (Remark \ref{rm.eps}) and that $H^0_{\fm}(R)=0$ iff depth$(R) \geq 1$ (\cite[Proposition 3.5.4.]{BH98}). Hence $R_\fn(\fp)$ is reduced iff 
$\text{depth}(R_\fn(\fp)) \geq 1$, which holds since $\fp$ is generated by a non-zero divisor and therefore
$\text{depth}(R_\fn(\fp)) =\text{depth}(R_\fn)-1$ (\cite{Stack} Lemma 10.71.7, tag 090R).

(4) Let $R$ be reduced and $\fn \in \Spec R$ a normal closed point of $R$. By assumption $\dim R_\fn \geq 2$ and by Serre's condition (S2) depth$(R_\fn) \geq 2$. Hence $R(\fp)_\fn = R_\fn(\fp)$
is reduced by (3) (iii), and this proves (i) $\Rightarrow$ (ii).
Conversely, as $R$ is reduced at all normal points we consider
 a non-normal point $\fn$ of $R$ ($\fn$ is then a closed point). Let $\fp = \fn \cap A$, $t$ a generator of $\fp$ and $f$ the image of $t$ in $R$. Since  $R_\fn$ is an INNS, $\nil(R_\fn)$ is concentrated on $\fn$ and hence killed by a power of $f$. Since each power of $f$ is a non-zero divisor of $R_\fn$, $\nil(R_\fn) =0$ 
and (ii) $\Rightarrow$ (i) follows.
\end{proof}

\begin{example}\label{ex.charp}
{\em
The condition in Lemma \ref{lem.flat} (3)(iii) that $R_\fn(\fp)$ is an $\INNS$ is necessary:\\
Let $A=\k[z], \k$ algebraically closed and char$(\k) = p >0$ and $R = A[x,y]/\langle f\rangle$, $f = y^p - x^p -z$. 
Then $R$ is regular of depth 2 at every closed point, and the canonical map $\vp: A \to R$ is flat ($z$ is a non-zero divisor in $R$). 
All fibers $R(s) = \k[x,y]/y^p-x^p -s$  over closed points $\langle z-s\rangle, s \in  \k$, are not reduced ($y^p-x^p -s= (y-x-s^{1/p})^p$) and not an INNS (the generic fiber is however regular).

Such an example is not possible in characteristic 0 (see Section \ref{sec.4}). E.g. if $f:(\C^n,0)\to (\C,0)$ is flat then the fibers $f^{-1}(t)$ are smooth for $t\neq 0$ close to $0$.}
\end{example}

\begin{lemma}\label{lem.red}
Let $\vp: (A,\fm) \to (R,\fn)$ be a flat morphism of  local rings.
\begin{enumerate}
\item $\oR \otimes_A Q(A) = \ol{R \otimes_A Q(A)}$; in particular, 
if $R$ is normal, then  $R \otimes_A Q(A)$ is normal.
\item If $(A,\fm)$ is a discrete valuation ring and $R \otimes_A A/\fm$ reduced, then $R\otimes_A Q(A)$ is reduced.
\end{enumerate}
\end{lemma}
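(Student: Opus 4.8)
The plan is to handle the two parts separately: part (1) will be a reduction to the reduced case followed by the standard compatibility of integral closure with localization, and part (2) will be a short direct clearing-of-nilpotents argument. Throughout I use the standing assumption that $\oR$ is finite over $R$.

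For part (1), first I would reduce to the case where $R$ is reduced. Both sides of $\oR\otimes_A Q(A)=\ol{R\otimes_A Q(A)}$ depend only on $\Rr$: the left because $\oR=\ol{\Rr}$ by definition, and the right because $(R\otimes_A Q(A))^{red}=\Rr\otimes_A Q(A)$, since the nilradical commutes with the localization $S^{-1}$, where $S:=A\sm\{0\}$ and $Q(A)=S^{-1}A$. By Lemma \ref{lem.flat}\,(1)(i) the induced map $A\to\Rr$ is again flat, so I may replace $R$ by $\Rr$ and assume $R$ reduced. Flatness over the domain $A$ makes $R$ torsion-free, so every $\vp(a)$ with $a\neq0$ is a non-zero divisor of $R$; hence $R\otimes_A Q(A)=S^{-1}R$ is an honest localization inverting non-zero divisors and is again reduced. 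Because $R$ is reduced and $S$ consists of non-zero divisors, $S$ avoids all minimal primes of $R$, so every minimal prime survives in $S^{-1}R$ and $Q(S^{-1}R)=Q(R)=S^{-1}Q(R)$. Now I would invoke that integral closure commutes with localization (\cite[Tag~00GK]{Stack}): the integral closure of $S^{-1}R$ in $S^{-1}Q(R)=Q(S^{-1}R)$ equals $S^{-1}$ of the integral closure of $R$ in $Q(R)$, that is, $\oR\otimes_A Q(A)=S^{-1}\oR=\ol{S^{-1}R}=\ol{R\otimes_A Q(A)}$. The ``in particular'' is immediate, since if $R$ is normal then $R=\Rr=\oR$ and the equality exhibits $R\otimes_A Q(A)$ as equal to its own normalization.

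For part (2), with $A$ a discrete valuation ring I fix a uniformizer $t$ generating $\fm$, so that $R\otimes_A A/\fm=R/tR$ and $R\otimes_A Q(A)=R[1/t]$. Since localizations of reduced rings are reduced, it suffices to show $R$ itself is reduced, i.e.\ $\nil(R)=0$. Flatness of $\vp$ forces the image of $t$ in $R$ to be a non-zero divisor. Suppose $\nil(R)\neq0$; picking a nonzero nilpotent $y$ with $y^k=0$ for minimal $k\geq2$ and setting $x:=y^{\lceil k/2\rceil}$, I get $x\neq0$ with $x^2=0$. As $\vp$ is local, $t\in\fn$, so $\bigcap_n t^nR\subseteq\bigcap_n\fn^nR=0$ by Krull's intersection theorem; hence there is a largest $n$ with $x\in t^nR$, say $x=t^nu$ with $u\neq0$ and $u\notin tR$. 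From $0=x^2=t^{2n}u^2$ and the fact that $t$ is a non-zero divisor I obtain $u^2=0$, so the image $\bar u\in R/tR$ satisfies $\bar u^2=0$; reducedness of $R/tR$ gives $\bar u=0$, i.e.\ $u\in tR$, contradicting $u\notin tR$. Therefore $\nil(R)=0$ and $R[1/t]=R\otimes_A Q(A)$ is reduced.

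The main obstacle lies in the bookkeeping of part (1): one must verify that, after localizing the normalization, one genuinely lands in the total quotient ring of $S^{-1}R$ and recovers the \emph{full} integral closure there, which is precisely the identity $S^{-1}Q(R)=Q(S^{-1}R)$. This is exactly where the reduction to the reduced case and torsion-freeness over the domain $A$ are used, and it is the only nontrivial input; part (2) is then a self-contained application of Krull's intersection theorem together with the non-zero-divisor property of the uniformizer.
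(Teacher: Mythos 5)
Your proof is correct and follows essentially the same route as the paper's: part (1) rests on the identification $Q(S^{-1}R)=S^{-1}Q(R)$ forced by flatness (the paper phrases the conclusion as a birational inclusion into the normal localization $\oR\otimes_A Q(A)$ rather than invoking commutation of integral closure with localization, but the content is the same), and part (2) is the same Krull-intersection argument, organized as a contradiction with a square-zero element instead of the paper's iterated division by $t$. One small remark: part (1) of the lemma does not assume $A$ is a domain, but your argument survives verbatim once ``torsion-free over the domain $A$'' is replaced by the observation that a flat map carries non-zero divisors of $A$ to non-zero divisors of $R$.
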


\begin{proof} $A\subset R$ since  $R$ is flat, hence faithfully flat over $A$.

(1) We have  $R \otimes_A Q(A)= \{\frac{r}{a} |\, r\in R, a\in A \text{ a non-zero divisor}\}$, hence
$Q(R \otimes_A Q(A))= Q(R)$ and
$Q(\oR \otimes_A Q(A))= Q(\oR) = Q(R)$. Thus 
$\oR \otimes_A Q(A))$ is normal and
$\ol{R \otimes_A Q(A)} \subset \oR \otimes_A Q(A))$. Since the last inclusion is birational it is an equality. 

(2) Since $A$ is a DVR, $\fm  = \langle t \rangle$ for some $t$.
Then  $R\otimes_A Q(A) = R_t = \{\frac{r}{t^\nu} | r \in R ,\nu \geq 0\}$. 
Assume $(\frac{r}{t^\nu})^n =0$ for some $n$. Then $r^n =0$ since $t$ is a non-zero divisor of $R$ and $\bar r ^n=0$, $\bar r$ the image of $r$ in $R/\fm R = R/ tR$. Since $R/ tR$ is reduced, $r = t r'$ for some $r' \in R$. By induction $r \in \cap t^\nu R$, and $\cap t^\nu R = 0$ by Krull's intersection theorem. Hence $r=0$.
\end{proof}

The following lemma is used for a geometric interpretation of the technical assumption  that   
$\Coker(\nu)$ and $\Ker (\nu^{>1})$ are finite over $A$ in the proof of our main Theorem \ref{thm.semicont1}.

\begin{lemma} \label{lem.ann}
Let $A\to R$ be a ring map with $A$ Noetherian and $M$ a finitely generated $R$-module.  Then 
the following are equivalent.
\begin{enumerate}
\item [(i)] $M$  is finite over  $A$. 
\item [(ii)] $R/\Ann_R(M)$ is finite over $A$.
\item [(iii)] $R/I$ is finite over $A$ for every ideal $I\subset R$ with $\sqrt I = \sqrt{\Ann_R(M)}$.
\end{enumerate}
\end{lemma}

\begin{proof}
(i)$\Rightarrow$(ii): Let $M$ be generated over $R$ by $m_1,...,m_n$. Then
we can embedd $R/\Ann_R(M)$ in $M^n$ by $r \mapsto (rm_1,...,rm_n)$.  As $M$ is finite over $A$,  $M^n$ is finite over $A$ and since $A$ is Noetherian, $M^n$ is a Noetherian $A$-module. Since any submodule of a Noetherian module is Noetherian,   
$R/\Ann_R(M)$ is a Noetherian $A$-module and therefore finite over $A$. 

(ii)$\Rightarrow$(iii): If $I\subset J$ are two ideals in $R$ then $R/I$ finite over $A$ implies obviously that $R/J$ is finite over $A$. Moreover,
$$R/I^n  \text{ finite over $A $ for all } n\geq 1 \iff  R/I \text{ finite over } A,$$
To see ''$\Leftarrow$''  we consider the exact sequence
$0\to I^{n-1}/I^n \to R/I^n \to R/I^{n-1} \to 0$. Starting with $n=2$ we may assume by induction  that $R/I^{i-1}$ is finite over $A$ for $i=2,...,n-1$. Since $I^{n-1}/I^n$ is finite over $R/I$, it is finite over $A$. Hence $R/I^n$ is finite over $A$.

By assumption $R/\Ann_R(M)$ is finite over $A$ and hence $R/\Ann_R(M)^n$ is finite over $A$ for all $n$. There exists an $n$ with $\Ann_R(M)^n \subset I$ and therefore $R/I$ is finite over $A$.

(iii)$\Rightarrow$(i):  $M$ is finite over $R/\Ann_R(M)$ and since $R/\Ann_R(M)$ finite over $A$
by assumption, $M$ is finite over $A$.
\end{proof}

For  Lemma \ref{lem.fin} we introduce the following notations. 
Let   $\nu=\nu^{red} \circ \pi :R \to \Rr \to \oR$ be the  normalization map and $\nu^{>1}$  be the composition  
$$\nu^{>1}:R \twoheadrightarrow \Rr \hookrightarrow \oR= \ol{R^1}\oplus  \ol{R^{>1}}
\twoheadrightarrow \ol{R^{>1}}.$$
Let $\tilde \fp_i= \fp_i/\nil(R)$ be the minimal prime ideals of $R^{red}$   and $\tilde \fp^1$ resp. 
$\tilde\fp^{>1}$ be the intersection of the $\tilde \fp_j$ with $\dim R/\fp_j =1$ resp. $\dim R/\fp_j >1$.   We assume that  $\dim R/\fp_j \geq 1$ for all $j$.
Then $\nil(R) = \fp^1 \cap \fp^{>1}$ and the kernel of 
$$R^{red}  \hookrightarrow R^{red}/\tilde\fp^1\oplus R^{red}/\tilde\fp^{>1} \to \ol{R^{red}/\tilde \fp^{>1} }= \ol{R^{>1}}$$
is $\tilde\fp^{>1}$ with $\Ann_{R^{red}} (\tilde\fp^{>1}) = \tilde\fp^1$. It follows
$$\Ker(\nu^{>1}) = \fp^{>1} \text{ and }\Ann_{R} (\tilde\fp^{>1}) = \fp^1.$$
From the exact sequence
$0\to \nil(R) \to \fp^{>1} \to  \tilde\fp^{>1}\to 0$ we get $\Supp_R(\fp^{>1}) = \Supp_R(\nil(R)) \cup \Supp_R(\tilde\fp^{>1}))$ and therefore
$$\sqrt{\Ann_R (\fp^{>1})} = \sqrt{\Ann_R(\nil(R))} \cap \sqrt{\Ann_R(\tilde\fp^{>1})}=\sqrt{\Ann_R(\nil(R)) \cap \fp^1}.$$
Hence
$$\Supp_R(\Ker(\nu^{>1})) = \Supp_R(\nil(R)) \cup V(\fp^1),$$
$$\Supp_R(\Coker(\nu)) \cup \Supp_R(\Ker(\nu^{>1})) = V(\wt{\kc}_R ) \cup  V(\fp^1),$$
with $\wt{\kc}_R $ the extended conductor ideal.

Note that  $V(\wt\kc_R)=\NNor(R)$ resp.  $V(\fp^1)$ are the  non-normal locus resp.  the 1-dimensional part of $\Spec R$.

\begin{lemma} \label{lem.fin}   Let $A\to R$ is a ring map and let all minimal primes of $R$ have dimension $\geq1$.
 With the above notations  the following are equivalent.
\begin{enumerate}
\item [(i)] $\Coker(\nu)$  and $\Ker (\nu^{>1})$ are finite over $A$.
\item  [(ii)]  $R/\wt\kc_R \cap \fp^1$  is finite over $A$.
\item  [(iii)] $R/\wt{\kc}_R$  and $R/\fp^1$ are finite over $A$.
\end{enumerate}
\end{lemma}

\begin{proof} From the canonical exact sequence, for ideals $I,J \in R$,
$$0 \to R/I\cap J \to R/I\oplus R/J \to R/I+J \to 0,$$
(with $f \mapsto (f,f)$ and $(f,g) \mapsto f-g$) it follows that 
$$ R/I, \ R/J \text{ are finite over } A \iff
R /I\cap J\text{ is finite over } A$$ 
(the finiteness of $R/I+J $  follows from that of  $R /I\cap J$). This shows the equivalence of (ii) and (iii).
\medskip

We apply now Lemma \ref{lem.ann}. 
Since 
$\Ann_R(\Coker(\nu))=\kc_R$ (the conductor ideal of $R$),  $\Coker(\nu)$ is finite over $A$ iff
$R/\kc_R$ is finite over $A$, and since 
$$\sqrt{\Ann_R(\Ker(\nu^{>1}))} =\sqrt{\Ann_R(\nil(R)) \cap \fp^1},$$
$\Ker(\nu^{>1})$ is finite over $A$ iff $R/(\Ann_R(\nil(R)) \cap \fp^1)$ is finite over $A$. It follows that 
 $\Coker(\nu)$  and $\Ker (\nu^{>1})$ are finite over $A$ iff 
 $$R/\kc \cap \Ann_R(\nil(R)) \cap \fp^1 = R/\wt\kc_R \cap \fp^1$$ 
 is finite over $A$, showing the equivalence of (i) and (ii).
\end{proof}

We are now going to prove the semicontinuity of $\del$ and $\eps$ in flat families $A\to R$ over a principal ideal domain $A$. For a geometric interpretation of the assumption  that   $\Coker(\nu)$ and $\Ker (\nu^{>1})$ are finite over $A$ see Lemma \ref{lem.fin}.

\begin{theorem}\label{thm.semicont1}
Let $\vp:A\to R$ be a flat morphism of rings,  $A$  a principal ideal domain, and let the  normalization $\nu :R \to \oR$ be finite.
Let $X =\Spec R$,  $\oX =\Spec \oR$,
$n =\Spec \nu :\oX \to X$ and $f =\Spec \vp : X \to S=\Spec A$.
Assume that   $\Coker(\nu)$ 
and $\Ker (\nu^{>1}: R\to \ol{R^{>1}})$
are finite over $A$ and,
moreover, that for each $s\in f(X)$ the normalization of $X_s =f^{-1}(s)$
is finite over $X_s$ and that $X_s$ is normal outside finitely many isolated non-normal singularities at which the residue fields are finite over $k(s)$.

Then $\ol{ f} := f\circ n : \oX \to S$ is flat,  $\del_{k(s)}(X_s) < \infty, \,\eps_{k(s)}(X_s) < \infty$, and for each  $s\in f(X)$ there exists an open neighbourhood $V\subset S$ of $s$ such that  the following holds for $U=V\cap f(X)$: 
\begin{enumerate}
 \item 
$\delta_{k(s)}(X_s)-\delta_{k(t)}(X_{t})\\ 
\text{  } =  \delta_{k(s)}((X^{red})_s)-\delta_{k(t)}((X^{red})_{t})\\
\text{  } =\delta_{k(s)}((X^{>1})_s)-\delta_{k(t)}((X^{>1})_{t})\\
\text{  } =\delta_{k(s)}((\oX)_s)-\delta_{k(t)}((\oX)_{t})\\
\text{  } = \delta_{k(s)}((\oX^{>1})_s)-\delta_{k(t)}((\oX^{>1})_{t})$ for $t\in U$.
\item If $(\ol{X^{>1}})_t$  is normal for $t \in U\sm\{s\}$, then\\
$ \delta_{k(s)}(X_s)-\delta_{k(t)}(X_{t}) 
=\delta_{k(s)}((\ol{X^{>1}})_s)  \geq 0. $
\item $\delta_{k(s)}(X_s)-\delta_{k(\eta)}(X_{\eta}) =\delta_{k(s)}((\ol{X^{>1}})_s) \geq 0$, $\eta$ the generic point of $S$.
\item $\varepsilon_{k(s)} (X_s)-\varepsilon_{k(t)}(X_{t})= \eps_{k(s)}(({X}^{>1})_s)\geq 0$ for $t\in U\sm \{s\}$.
\item $(\ol{X^{>1}})_t = (\oX^{>1})_t:=(\ol{f} | \oX^{>1})^{-1}(t)$  is reduced for every $t\in U$. If $X$ is reduced then $X_t$ is reduced for $t\in U\sm \{s\}$.\\
\end{enumerate}
\end{theorem}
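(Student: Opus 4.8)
The plan is to derive all five assertions from repeated applications of Proposition \ref{prop.fibers}, which transports $\delta$- and $\eps$-differences across a finite modification $\mu\colon R\to\wR$. The preparatory step is to produce the maps $\mu$ and to check the hypotheses of that proposition. Flatness of $\ol f$ is Lemma \ref{lem.flat}(1)(ii), since $\oR$ is reduced; the same lemma yields flatness of $A\to\Rr$, of $A\to R^{>1}$ (as $\fp^{>1}$ is an intersection of minimal, hence associated, primes) and of $A\to\ol{R^{>1}}$. Finiteness of $\delta_{k(s)}(X_s)$ and $\eps_{k(s)}(X_s)$ is Lemma \ref{lem.artin}(2). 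For the $A$-finiteness of kernels and cokernels I note that $\nil(R)\subseteq Ker(\nu^{>1})=\fp^{>1}$ because $\ol{R^{>1}}$ is reduced, so $\nil(R)$ is $A$-finite as a submodule of the $A$-finite module $Ker(\nu^{>1})$; and $Coker(\nu^{>1})=\ol{R^{>1}}/R^{>1}$ is a quotient of $Coker(\nu)=\oR/\Rr$, hence $A$-finite. Thus Proposition \ref{prop.fibers} applies to each of $\mu\in\{\,R\twoheadrightarrow\Rr,\ R\twoheadrightarrow R^{>1},\ \nu\colon R\to\oR,\ \nu^{>1}\colon R\to\ol{R^{>1}}\,\}$.

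For (1) I would apply Proposition \ref{prop.fibers}(i) to these four maps and intersect the finitely many resulting neighbourhoods of $s$. Each application gives $\delta_{k(s)}(X_s)-\delta_{k(t)}(X_t)=\delta_{k(s)}(\wR(s))-\delta_{k(t)}(\wR(t))$, where $\wR(t)$ is successively $(X^{red})_t$, $(X^{>1})_t$, $(\oX)_t$ and $(\oX^{>1})_t$, which is the whole chain of equalities simultaneously.

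Assertion (5) must be proved next, since (2)--(4) rest on it, and it is the main obstacle. Because $\ol{R^{>1}}$ is normal it satisfies $S_2$, and it is flat over $A$; at a closed point $\fn$ of a component of dimension $\ge2$ lying over $\fp\neq\langle0\rangle$ one has $\dim(\ol{R^{>1}})_\fn\ge2$, so $\mathrm{depth}(\ol{R^{>1}})_\fn\ge2$, and cutting by the nonzerodivisor $f$ (the image of a generator of $\fp$) drops depth by one to give depth $\ge1$ on the fibre. Knowing the fibre is an INNS (inherited from the hypothesis on $X_s$), Lemma \ref{lem.flat}(3)(iii) upgrades this to reducedness; the generic fibre is reduced by Lemma \ref{lem.red}(1). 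The delicate point, and the reason the isolated–non-normal hypothesis cannot be dropped, is precisely that without it a fibre of the normal total space may fail to be reduced along a whole component (cf. Example \ref{ex.charp}); reconciling the fibre of the normalization with the normalization of the fibre in positive characteristic is where the argument is most subtle. The second clause of (5) follows from Proposition \ref{prop.fibers}(iv) applied to $\nu$ (whose kernel $\nil(R)$ vanishes when $X$ is reduced) together with reducedness of the nearby normalization fibres, after shrinking $U$.

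With (5) in hand, (2) and (3) are immediate from (1): one has $\delta_{k(s)}(X_s)-\delta_{k(t)}(X_t)=\delta_{k(s)}((\oX^{>1})_s)-\delta_{k(t)}((\oX^{>1})_t)$, and $\delta_{k(t)}((\oX^{>1})_t)=0$ because $(\ol{X^{>1}})_t$ is normal of positive dimension under the hypothesis of (2) (Remark \ref{rm.eps}(3)), while for (3) the generic fibre of a normalization is normal (Lemma \ref{lem.red}(1)); the inequality $\delta_{k(s)}((\oX^{>1})_s)\ge0$ is nonnegativity of $\delta$ for the reduced positive-dimensional fibre supplied by (5). Finally, for (4) I apply Proposition \ref{prop.fibers}(iii) with $\mu=\nu^{>1}$, legitimate since $\ol{R^{>1}}{}^1=0$; because $(\oX^{>1})_t$ is reduced for all $t$ by (5), the $\eps(\ol{R^{>1}}(\cdot))$ terms vanish and there remains $\eps_{k(s)}(X_s)-\eps_{k(t)}(X_t)=\dim_{k(s)}Coker(\nu^{>1})\otimes_A k(s)-\dim_{k(t)}Coker(\nu^{>1})\otimes_A k(t)$. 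Running the same computation for the family $R^{>1}\to\ol{R^{>1}}$ identifies this right-hand side with $\eps_{k(s)}((X^{>1})_s)-\eps_{k(t)}((X^{>1})_t)$, and Proposition \ref{prop.fibers}(iv) (zero kernel of the normalization of the reduced $R^{>1}$) forces $\eps_{k(t)}((X^{>1})_t)=0$ for $t\neq s$, giving $\eps_{k(s)}(X_s)-\eps_{k(t)}(X_t)=\eps_{k(s)}((X^{>1})_s)\ge0$.
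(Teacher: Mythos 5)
Your argument is essentially the paper's own proof: the same four auxiliary maps $R\to R^{red}$, $R\to R^{>1}$, $R\to\oR$, $R\to\ol{R^{>1}}$ fed into Proposition \ref{prop.fibers}, the same finiteness bookkeeping for their kernels and cokernels, the same depth argument via Lemma \ref{lem.flat}(3)(iii) together with Lemma \ref{lem.red} for the reducedness and generic normality of the fibres of $\oX^{>1}$, and the same derivation of (2)--(4) from these; your reordering (establishing (5) before (2)--(4), which in fact matches the logical dependencies better than the paper's presentation) and your two-step cancellation of the cokernel terms in (4) (the paper instead applies Proposition \ref{prop.fibers}(iii) to $R\to R^{>1}$, whose relevant cokernel vanishes outright) are only cosmetic variants. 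The one point the paper treats and you do not is the degenerate case $R^{>1}=0$, where $\ol{R^{>1}}=0$ and $R=Ker(\nu^{>1})$ is finite, hence free, over $A$; the paper disposes of it separately at the outset so that the subsequent applications of Proposition \ref{prop.fibers} need no special interpretation.
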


\begin{remark}\label{rem.fiber}{\em
Let $x_1,...,x_{r_s}$ be the  isolated non-normal singularities of $X_s$, with $s$ corresponding to 
a prime ideal $\fp \subset A$. Then each $x_i$ is a closed point of $X_s$ (cf. Remark \ref{rem.inns}), corresponding to a maximal ideal $\fn_i$ of $R(\fp)$ and
$$\del_{k(s)}(X_s)= \sum_1^{r_s}\del_{k(s)}(X_s,x_i),$$
with $\del_{k(s)}(X_s,x_i) := \del_{k(\fp)}(R(\fp)_{\fn_i}) 
= \dim_{k(\fp)}\ol{R(\fp)_{\fn_i}}/R(\fp)_{\fn_i}^{red} -\eps_{k(\fp)} (R(\fp)_{\fn_i}). $ 
If $s$ is a closed point of $S$ (i.e. $\fp\neq \langle 0 \rangle$) then the $x_i$ are closed points of $X$ and
$R(\fp)_{\fn_i}=R_{\fn_i}/\fp R_{\fn_i}$.
If $\fp = \langle 0 \rangle$ is the generic point $\eta$, then $k(\eta) = Q(A)$ and 
$R(\fp)_{\fn_i}=R_{\fn_i}\otimes_A Q(A)$. For a concrete example see Example \ref{ex.semicont1}.
}
\end{remark}

Before giving the proof, we'd like to comment on the result.  A semicontinuity theorem in the algebraic setting was proved by Chiang-Hsieh and Lipman in \cite[Proposition 3.3 and Theorem 4.1]{CL06}  under several assumptions, including the following (in our notation):
(i) $(A,\fm)$ is a normal local ring with perfect residue field, (ii) $A$ is complete, or $A$ is henselian and $R$ is a localization of a finitely generated $A$-algebra,
or  $A$ and $R$ are both analytic local rings,
(iii)  $R$ is a formally equidimensional Nagata ring, and (iv) the special fiber $X_s = \Spec R/\fm R$ is a reduced curve and every closed point of $X$ is contained in $X_s$.
The authors prove that   $\delta_{k(t)}(X_t)-\delta_{k(\eta)}(X_{\eta}) =\delta_{k(t)}((\ol{X})_t)$ for $t\in \Spec A$ (since $\Spec A$ has only one closed point, the semicontinuity holds only for generalizations to the generic point $\eta$).
 Apart from the fact that there is no restriction for $\dim A$ in \cite{CL06}, our result is stronger in several ways.  We do not assume (a) that the residue fields at closed points of $\Spec A$ are perfect, (b) that $X$ is equidimensional, (c) that
 the fibers are reduced curves and (d) that $A$ is local; it can be e.g. $\Z$ or $\k[t]$, $\k$ an arbitrary field. Thus our semicontinuity holds also for closed points in a neighbourhood of the given point $s$ in $\Spec A$.
 
For complex analytic map germs $f:(X,x) \to (S,0)$, with $(S,0)=(\C,0)$
and $(X_0,x)$ a {\em reduced curve singularity,} the result is classical and due to Teissier \cite{Te78}. 
 
\begin{remark}\label{rem.semicont1}
{\em 
\begin{enumerate} 
\item  We have $\oR = \oR^1\oplus \oR^{>1}$ and
$\nil(R)=\Ker(R\to R^{red})=\Ker(\nu) \subset \Ker(\nu^{>1})$ 
since $\oR \to \oR^{>1}$ is surjective. Hence, if  $\Ker(\nu^{>1})$ is  finite over $A$ then $\Ker(\nu)$ is also finite over $A$. On the other hand, if $\Coker(\nu)$ is finite over $A$, then $\Coker(\nu^{>1})$ is finite over $A$
since  $\Coker(\nu)$ surjects onto $\Coker(\nu^{>1})$. 
\medskip

\item  We remark that every irreducible component of $\oX$ has dimension $\geq1$ ($\ol{f}$ is flat)  and that $\oX= \oX^{>1}\sqcup  \oX^{1}$ and 
$(\oX)_t= (\oX^{>1})_t\sqcup  (\oX^{1})_t$ with $(\oX^{>1})_t = (\oX)_t^{>0}$ and $(\oX^{1})_t = (\oX)_t^{0}$.
 In particular,
 \begin{align*}
\text{    } \quad (\ol{X^{>1}})_t = (\ol{X})_t
& \iff (\ol{X})_t \text{ has no isolated points}\\
  & \iff X \text{ has  no 1-dimensional components meeting } X_t.
\end{align*}
\item  Since  $\varphi$ is injective, the generic point $\eta =\langle 0\rangle $ is contained in $f(X)$  (\cite[Lemma 29.4.,tag 00FJ]{Stack}). It may however happen that $f(X)$ does not contain any open subset of $S$. E.g. $f(X) = \{\eta\}$ for  $A=\Z \to R = \Q[x]$ since $A\cap \fp = \langle 0\rangle $ for
every prime ideal $\fp \in R$
(see also Example \ref{ex.open}). 
 \medskip

\item  The statement of the theorem is especially interesting if $f$ is surjective or if $f$ is open (which holds if $X$ is of finite presentation over $S$ by \cite [Proposition 10.40.8.,tag 00I1]{Stack}, or for analytic maps).
 \end{enumerate}
 }
\end{remark}

\begin{proof}(of Theorem \ref{thm.semicont1})
Let  $\mu: R\to \wR$ be one of the maps  $\nu^{red}: R\to R^{red}$,
$\nu{'}:R\to R^{>1}$, $\nu:R\to \oR$, and
$\nu^{>1}: R\to \oR^{>1}.$ 
Since $\Ker (\nu^{red}) = \Ker (\nu) \subset \Ker( \nu^{>1}) = \Ker (\nu')$,
it follows that $\Ker (\mu)$ and $\Coker (\mu)$ are finite over $A$, since $\Ker (\nu^{>1})$ and $\Coker (\nu)$ are finite over $A$  by assumption.

If $R^{>1} =0$, i.e. $X$ is of pure dimension 1, then $R=\Ker (\nu^{>1})$ and hence $R$ is finite over $A$. Since $R$ is $A$-flat, it is torsion free, hence free and
$\dim_{k(\fp)}(R(\fp)) = \eps_{k(\fp)}(R(\fp)) = - \del_{k(\fp)}(R(\fp))$ is constant on $\Spec A$. 
Thus the theorem holds in this case trivially and we will assume in the following the $R^{>1} \neq0$.

By Lemma \ref{lem.flat} the maps 
$\tilde f= \Spec (\mu \circ \vp): \wX=\Spec \wR \to S$ are flat. We use the notation $(\wX)_t = \tilde f^{-1}(t)$.

(1) We can apply Proposition \ref{prop.fibers}  (i) to each map 
$\mu$ and we get the equalities in statement (1).

Now consider $\ol{f}^{>1}:= \ol{f} | \oX^{>1} :  \oX^{>1} \to S$. From Proposition \ref{prop.fibers}  (i) and (iii) we get
an open neighbourhood $U$ of $s$ such that for $t \in U\cap f(X)$ 
\begin{enumerate} 
\item [(a)]$\delta_{k(s)}({X}_s)-\delta_{k(t)}(X_{t})
=\delta_{k(s)}(({\ol X}^{>1})_s)-\delta_{k(t)}((\ol X^{>1})_t)$. 
\end{enumerate} 
Altogether this proves (1).

(2) $(\oX^{>1})_t$ normal implies $\delta_{k(t)}((\oX^{>1})_t) = 0$, $t\neq s$. Since 
$\ol X^{>1} =\ol {X^{>1}}$ is normal and of dimension $\geq 2$ at each closed point $x$, depth$(\ol X^{>1}) \geq 2$ at $x$. By Lemma \ref{lem.flat} (3)(iii),  $(\oX^{>1})_t, t\in f(X),$ is reduced at every closed point, hence at every point. Thus
 $\delta_{k(t)}((\oX^{>1})_s) \geq 0$ and (a) proves (2).

(3) By Lemma \ref{lem.red}
the ring $\oR^{>1}(\eta)$ and thus the generic fiber $\oX^{>1}_\eta$ is normal, and (3) follows from (2). 

To get estimates for $\eps$ we apply Proposition \ref{prop.fibers}  (iii) to 
 $f' = \Spec(\nu{'}\circ \vp) : X^{>1} \to S$  and get
\begin{enumerate} 
\item [(b)]$\eps_{k(s)}(X_s)-\eps_{k(t)}(X_t) = \eps_{k(s)}(({X}^{>1})_s)-\eps_{k(t)}(({X}^{>1})_t)\\
+\dim_{k(s)} \Coker(\nu'^{>1})\otimes_Ak(s) - \dim_{k(t)} \Coker(\nu'^{>1})\otimes_Ak(t)\\
= \eps_{k(s)}(({X}^{>1})_s)-\eps_{k(t)}(({X}^{>1})_t),$\\
since $\Coker(\nu'^{>1}) = R^{>1} / \nu'(R^{>1}) =0.$
\end{enumerate} 

(4) Since ${X}^{>1}$ is reduced,  $(X^{>1})_t$ is reduced  for $t \in U\sm \{s\}$ by (5). Hence $\eps_{k(t)}(({X}^{>1})_t)=0$ and (4) follows from (b).

(5) If $X$ is reduced, i.e. if $R$ is reduced, then $\nu:R\to \oR$ is injective. By 
Proposition \ref{prop.fibers} (iv) $R(\fq) \to \oR(\fq)$ is injective for  $\fq \in U\sm \{\fp\}$, $U$ some neighbourhood of $\fp$. Hence $X_t$ is reduced
for $t \in U\sm \{s\}$.
By the proof of (2) $(\oX^{>1})_t$ is reduced for all $t\in U$.
\end{proof}

We illustrate Theorem \ref{thm.semicont1} with some examples.

\begin{example}\label{ex.semicont1}
{\em
\begin{enumerate}
\item Let $\vp: A=\k [z] \to R=A[x,y]/ I$, $I=\langle x^2+y^2-z^2 \rangle \cap \langle x-y, y^2-z \rangle$. Then $X=V(I) = X^2 \cup X^1$, with $X^2$ the normal surface singularity defined by $x^2+y^2-z^2=0$ and  $X^1$ the smooth curve defined by $ x-y=y^2-z=0$, meeting $X^2$ at $(0,0,0)$ and 
$(x,y,z)=(x,y,2)$ with $x=y=\pm\sqrt 2$ (if $\sqrt 2 \in \k$). 

For $z=0$ the fiber $X_0$ is the nodal curve 
$x^2+y^2=0$ with an embedded point, and we compute $\eps_\k(X_0)=2$ and thus $\del_\k(X_0)=-1$. 
For $z=t\in \k, t\neq 0,2$ the fiber $X_t$ is a smooth curve 
and two extra reduced points not on the curve. Hence $X_t = (\oX)_t$ is normal with $\eps_\k(X_t)=2$. We get $\del_\k(X_t)=-2$ and $\del_\k(X_0)-\del_\k(X_t)=1$.

The normalization $\oX$ is the disjoint union of $X^{>1}=X^2$ and $X^1$. 
$(\oX)_0$ is the disjoint union of a nodal curve $(\oX^{>1})_0$ and a double point $(\oX^{1})_0$. We get
$\del_\k((\oX^{>1})_0) = 1$, confirming statement  (2) of Theorem \ref{thm.semicont1}.

Now let $\eta:=\langle 0\rangle$ be the generic point of $\Spec A$. The generic fiber is then $R\otimes_AQ(A)= \k(z)[x,y]/I$ cosisting of the regular curve 
$x^2+y^2-z^2=0$ (if char$(\k)\neq2$) and the isolated reduced point defined by the field $\k(z)[x,y]/\langle x-y, y^2-z \rangle =\k(z)[y]/\langle y^2-z \rangle$
a field extension of $\k(z)$ of degree 2.  Hence $\del_{\k(z)}(X_\eta)=-2.$

We get $\del_\k(X_0)-\del_{\k(z)}(X_\eta)=1$ and $\del_\k(X_t)-\del_{\k(z)}(X_\eta)=0$ if $t\neq0,2$ and $\del_\k((\oX^{>1})_t) = 0$. Both equalities confirm thus statement  (3) of Theorem \ref{thm.semicont1}.

Since $R$ is of finite typ over $A$ the flat map $f$ is open (in fact $f(X) = \Spec A$).  This is different in the following example.

%
%
\item Consider now $\vp: A=\k [z] \to R=\k [z]_{\langle z\rangle}[x,y,z]/I$, with $I$ as in (1).  Let $\fp$ be a prime ideal in $R$. If $z\in \fp$ then $\fp\cap A = \langle z\rangle$. 
 If $z\not\in \fp$ then $\fp \cap A = \langle 0\rangle$ (otherwise  $\fp \cap A = \langle p\rangle$ for some irreducible polynomial  $p(z) \not\in \langle z\rangle$ and since $p$ is a unit in $R$, $p\not\in \fp$). It follows that $f=\Spec \vp: X=\Spec R \to S=\Spec A$ is flat and $f(X)$ consists of two points $\langle z\rangle$ and $\eta=\langle 0\rangle$.
 As in (1) we get  $\del_\k(X_0)=-1$ and  $\del_\k(X_\eta)=-2.$
 
 We note that $f^{-1}(\eta) = \{\fp\in \Spec R \mid \fp \cap A = \langle 0\rangle\}
 $  and none of these $\fp$ is closed in $\Spec R$ ($R/\fp$ is not a field).  On the other hand, infinitely many of these prime ideals 
 (e.g.  $\fp=\langle x-p, y-q\rangle, p\in \k(z), q=(p-z)(p+z)$) are closed points of the fiber $X_\eta = \Spec \k(z)[x,y]/I $.
 \end{enumerate}
 }
\end{example}

\begin{example} \label{ex.nnormal2}
{\em We provide two examples showing the necessity of the assumptions  in 
Theorem \ref{thm.semicont1}.
\begin{enumerate}
\item The condition that $(\oX^{>1})_t, t\neq s$, is normal in 
Theorem \ref{thm.semicont1} (2) is necessary in positive characteristic (it automatically holds in characteristic 0, see Theorem \ref{thm.char0}):
Let $A=\k[z], \k$ algebraically closed, char$(\k) = p >2$, and $R = A[x,y]/\langle f\rangle, f = y^2 - x^p -z$. 
Then $R$ is regular, hence normal, of dimension 2 and the canonical map $\vp: A \to R$ is flat ($z$ is a non-zero divisor in $R$). 
For every closed point $\langle z-t\rangle, t\in \k,$ the fiber $R(t) = \k[x,y]/y^2-x^p -t$ has a reduced isolated non-normal point,
which is not  normal since $y^2 = (x+ t^{1/p})^p$. 

Since $\delta_\k(R(t)) = (p-1)/2$ is the same for every $t\in \k$ and since $\oR^{>1}(s) = R(s)$ the equality in Theorem \ref{thm.semicont1} (2) does not hold for $s, t \in \k$, while the equality in (3) holds for the generic fiber. 
Note that the generic fiber $\k(z)[x,y]/y^2-x^p -z$ is regular (hence normal) but not geometrically normal: it is not-normal in $\k(z^{1/p})[x,y]/y^2-x^p -z$.

This example shows also that $\del$ of the generic fiber is stricly smaller than $\del$ of every fiber over a closed point.
\smallskip

\item The assumption that $\Ker (\nu^{>1}: R\to \ol{R^{>1}})$
is finite over $A$ is necessary for the upper semicontinuity of $\eps$ even in characteristic 0 (the finiteness over $A$ of $\Ker(\nu)$ is not sufficient). For
$A=\k[x]$, $R = A[y]/\langle y(xy-1)\rangle$, we have $\eps_\k(R(0)) = 1$ but
$\eps_\k(R(s)) = 2$ for $s\in \k-\{0\}$.  In this case $R^{>1}=0$ and $X^{>1}=\emptyset$. Hence $\Ker(\nu^{>1})=R$, which is quasi-finite but not finite over $A$ (the class of $x$ in $R$ is not integral over $A$), while $\Ker(\nu) =0$.
See also Example \ref{ex.affine}.
\end{enumerate} }
\end{example}

Now let $X\to S$ be a morphism of Noetherian schemes with finite normalization map $n: \oX \to X$ and  $\wt{ C}_X$\footnote{\,If $X$ is covered by open affine sets  $X_i =\Spec R_i$, then $\wt{ C}_{X}|X_i = \wt{ C}_{R_i} = \Spec(R_i/\wt{\kc}_{R_i})$, c.f. Definition \ref{def.cond}.} 
the extended  conductor scheme, which is supported on the non-normal locus $\NNor(X)$ of $X$. 
Recall that $X^i$ resp. $X^{>i}$ denote the union of the irreducible components of $X^{red}$ of dimension  $i$ resp. $>i$. We say that {\em  $\NNor(X)$ and $X^1$ are finite over $S$} if $\ko_{\wt{ C}_X}$ and $\ko_{X^1}$ are  finite 
$\ko_S$-modules.
In view of Lemma \ref{lem.fin} this holds iff
$\Ker (n_*\ko_\oX \to \ko_{X^{>1}})$ and $\Coker (n_*\ko_\oX \to \ko_X)$ 
are finite $\ko_S$-modules.
With this notation and that of Theorem \ref{thm.semicont1} we get:

\begin{corollary}\label{cor.nagata}
Let  $f: X \to S$  be a flat morphism of schemes with $S$ the spectrum of a PID and $X$ Nagata. 
Assume that  $\NNor(X)$ and $X^1$ are finite over $S$ and that
for each $s\in f(X)$ the fiber $X_s $ has finitely many isolated non-normal singularities with residue fields finite over $k(s)$.
Then  the conclusions of Theorem \ref{thm.semicont1} hold.  

Morever, if all irreducible components of $X$ have dimension $\geq 2$ then the conclusions of Theorem \ref{thm.semicont1} hold with
$(X^{>1})_t = X_t$ and $(\ol{X^{>1}})_t = (\ol{X})_t$ for $t\in U$.
\end{corollary}

\begin{proof} We may assume that $X=\Spec R$ with $R$ a Nagata ring and  
$S= \Spec A$, $A$ a PID. Then
$\oR$ is finite over $R$ (\cite[Lemma 10.157.2, tag 03GH]{Stack}) and since $R/\fp R, \fp \in \Spec A$, is Nagata (cf. Remark \ref{rm.nagata}).
the normalization of $X_s$ is finite over $X_s$.  Now apply Theorem \ref{thm.semicont1}.
\end{proof}

The most interesting cases are perhaps when $R$ is (essential) of finite type over $A$, 
i.e., $R=A[x]/I$ with $x=(x_1,...,x_n)$ and $I$ an ideal, or $R$ is the localization $A[x]/I$ at some prime ideal. If $A$ is a PID which is Nagata (see Remark \ref{rm.nagata} for examples), then $R$ is Nagata by  Remark \ref{rm.nagata}.
Since a flat morphism is open in this situation (\cite [tag 01UA, Lemma 28.24.9.]{Stack}) we get the following corollary, which applies in particular to families of generically reduced curves (if $X$ is pure 2-dimensional).

\begin{corollary}\label{cor.nagata3}
Let  $f: X \to S$  be a flat morphism with $S$ the spectrum of a Nagata PID and $X$ locally (essentially) of finite type over $S$ and without 1-dimensional components. Assume that $\NNor(X)$ is finite over $S$ and that
each fiber $X_s $ has finitely many isolated non-normal singularities with residue fields finite over $k(s)$. Let  $n : \oX \to X$ be the normalization of $X$.

Then $\ol{ f} := f\circ n : \oX \to S$ is flat,  $\del_{k(s)}(X_s) < \infty, \,\eps_{k(s)}(X_s) < \infty$, and for each  $s\in S$ there exists an open neighbourhood $U\subset S$ of $s$ such that  the following holds (with
$(\ol{X})_t :=  \ol {f}^{-1}(t)$):
\begin{enumerate}
\item If $(\ol{X})_t $  is normal for $t \in U\sm\{s\}$, then\\
$ \delta_{k(s)}(X_s)-\delta_{k(t)}(X_{t}) 
=\delta_{k(s)}((\ol{X})_s)  \geq 0. $
\item $\delta_{k(s)}(X_s)-\delta_{k(\eta)}(X_{\eta}) =\delta_{k(s)}((\ol{X})_s) \geq 0$, $\eta$ the generic point of $S$.
\item $\varepsilon_{k(s)} (X_s)-\varepsilon_{k(t)}(X_{t})= \eps_{k(s)}(({X})_s)\geq 0$ for $t\in U\sm \{s\}$.
\item $(\ol{X})_t $  is reduced for every $t\in U$. If $X$ is reduced then $X_t$ is reduced for $t\in U\sm \{s\}$.\\
\end{enumerate}
\end{corollary}

Theorem \ref{thm.semicont1} and its corollaries say that over the spectrum of a PID the delta invariant of the generic fiber $X_\eta$ is minimal among all fibers $X_s, s\in f(X)$. It does not say, however, that the delta invariant of a special fiber $X_s$ is bigger or equal than the delta invariant of the fibers $X_t$ over closed points $t$ in a neighbourhood of $s$, except if $(\ol{X^{>1}})_t$ is normal for $t\neq s$. By Example \ref{ex.nnormal2} this may not be true and the equality in Theorem  \ref{thm.semicont1} (2) does not hold in positive characteristic (nevertheless, semicontinuity may hold in general but we do not know this). For characteristic 0 see Theorem \ref{thm.char0}, Section \ref{sec.4}. 


\section{Fiberwise and Simultaneous Normalization}\label{sec.3}

While the notion of simultaneous normalization is well known, the following (weaker) definition of fiberwise normalization is new. It is useful if the
residue fields of the base scheme are not perfect.

\begin {definition} \label{def.simnorm}
Let $m: \wX \to X$ and $f:X \to S$ be morphisms of schemes. 
\begin {enumerate}
\item 
We call  $m: \wX \to X$ a {\em fiberwise normalization} of $f$ if
\begin{enumerate}
\item $m$ is finite,
\item the composition $\wt{f} := f\circ m:  \wX \to S$ is flat, 
\item the non-empty fibers of $\wt{f} $ are normal, and
\item the induced map $m_t: \wt{f}^{-1}(t)  \to f^{-1}(t)$ is birational
for every $t\in f(X)$ 
\end{enumerate}
\item A fiberwise normalization of  $f$ is called a {\em  simultaneous normalization}  if the non-empty fibers of $\wt{f} $ are geometrically normal.
\end{enumerate}
\end{definition}

Recall that a $\k$-algebra $R$ is called {\em geometrically normal} (resp.  {\em geometrically reduced}) if $R\otimes_\k\k'$
is normal (resp. reduced) for every  field extension $\k \subset \k'$ (equivalently, for every finite field extension). If $\k$ is a perfect field, then a $\k$-algebra is normal (resp. reduced) iff it is geometrically normal (resp. reduced), see \cite[Lemma 10.43.3.,tag 030V and Lemma 10.160.1.,tag 037Z]{Stack}.
A morphism of rings $\vp: A\to R$ is  called {\em normal} (resp. {\em reduced}) if it is flat  and if the non-empty fibers $R(\fp):=R\otimes_A k(\fp)$,  are geometrically normal (resp. geometrically reduced) as $k(\fp)$-algebras, where $k(\fp)=A_\fp/\fp A_\fp = \Quot(A/\fp)$ is the residue field of $A_\fp$, $\fp \in \Spec A$.

A morphism of schemes $f:X \to S$ is geometrically normal (resp. geometrically reduced), if this holds for the induced morphisms of local rings. 
Hence, if the residue fields of all local rings of $S$ are perfect (e.g. of characteristic 0), then the notions of fiberwise normalization and simultaneous normalization coincide. 

Note that simultaneous normalization is preserved under base change,  while this is in general not the case for fiberwise normalization of schemes over non-perfect fields. On the other hand, the following results 
show that the weaker assumption of a fiberwise normalization is often sufficient and useful.
\medskip

\begin{lemma} \label{lem.base}
Let  $f:X \to S$ be a flat morphism of schemes and assume that $f$ admits a fiberwise normalization  $m: \wX \to X$. Let $\wt{f} = f\circ m:  \wX \to S$. Then the following holds:
\begin{enumerate}
\item $m$ is birational. 
\item $\wX$ is reduced (resp. normal) at $\tilde x$ if and only if $S$ is reduced (resp. normal) at $\wt{f}(\tilde x)$. If $\wX$ is normal, then $\wX \cong \oX$ and  $m$ is the normalization map.
\item The induced fiber map $m_t: \wX_t = \wt{f}^{-1}(t)  \to f^{-1}(t)= X_t$  is the normalization of $X_t$ for every $t\in f(X)$.
\item Let $S$ be normal, $t\in S$ and $x\in X_t$. Denote by $X^i_t, i = 1,...,r,$ the irreducible components of $X_t$ passing through $x$ and by $X^j, j = 1,...,s,$ the irreducible components of $X$ passing through $x$. Then $r=s$ and  for each $j$ there exists a unique $i=i(j)$ such that $X^i_t \subset X^j$. The corresponding components satisfy the {\em dimension formula}
$$\dim (X^j,x) = \dim (X^i_t,x) + \dim (S,t).\footnote{$\dim (X,x)$ denotes the dimension of the local ring $\ko_{X,x}$ of the scheme $X$ at $x$.} $$
In particular, if $\dim(X_t, x) > 0$ then each irreducible component of 
$X$ passing through $x$ has dimension $> \dim(S,t).$
\end{enumerate}
\end{lemma}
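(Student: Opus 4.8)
The strategy is to localize everything and exploit the two robust features of a fibrewise normalization: $m$ is finite with all fibres $\wX_t$ normal (hence reduced and locally irreducible), and each $m_t$ is birational. \emph{For parts (1) and (3)} I would first locate the generic points. As $f$ and $\wt f$ are flat, going-down shows that each generic point $\xi$ of $X$, resp.\ $\tilde\xi$ of $\wX$, maps to a generic point $t$ of $S$ and is generic in its fibre $X_t$, resp.\ $\wX_t$ (dimension formula for flat local homomorphisms, \cite[Theorem 15.1]{Mat86}). Hence the fibrewise bijections on generic points supplied by the maps $m_t$ assemble into a single bijection between the generic points of $\wX$ and of $X$. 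To upgrade the fibre isomorphism at $\xi$ to an isomorphism $\ko_{X,\xi}\cong\ko_{\wX,\tilde\xi}$, I use that $\ko_{S,t}$ is Artinian (its maximal ideal is nilpotent): since $m$ is finite, Nakayama's lemma makes $m$ surjective at $\xi$, and, feeding the flatness of $\wt f$ into the $\mathrm{Tor}$-sequence, also injective; this proves (1). Part (3) is then immediate, since $m_t$ is finite (base change of the finite $m$), birational by hypothesis, and has normal source $\wX_t$ by (c); a finite birational morphism onto $X_t$ with normal source is the normalization of $X_t$.

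\emph{For part (2),} fix $\tilde x$, set $t=\wt f(\tilde x)$, $A=\ko_{S,t}$ and $B=\ko_{\wX,\tilde x}$, so $A\to B$ is flat local. Every fibre $B\otimes_A k(\fq)$, $\fq\in\Spec A$, is a localization of a fibre of $\wt f$ and hence normal (in particular reduced) by (c); therefore \cite[Corollary to Theorem 23.9]{Mat86} yields that $B$ is reduced (resp.\ normal) iff $A$ is, which is exactly the asserted equivalence. If $\wX$ is normal, then by (1) the map $m$ is finite and birational with normal source, so by uniqueness of the normalization $\wX\cong\oX$ and $m$ is the normalization map.

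\emph{For part (4),} normality of $S$ forces $\wX$ to be normal by (2), so $m\colon\oX\to X$ is the normalization. Normality makes $\wX$ and each fibre $\wX_t$ locally irreducible: every point over $x$ lies on a unique component of $\wX$ and a unique component of $\wX_t$. Because $m$, resp.\ $m_t$ (by (3)), is finite and birational, it sets up a correspondence between the components of $\wX$ through $m^{-1}(x)$ and those of $X$ through $x$, resp.\ between the components of $\wX_t$ through $m^{-1}(x)$ and those of $X_t$ through $x$; moreover a finite birational morphism preserves the dimension of local rings. Restricting the flat $\wt f$ to an open-and-closed component $\wX^l$ and applying the flat dimension formula at a point $\tilde x$ over $x$ gives $\dim(\wX^l,\tilde x)=\dim(S,t)+\dim(\wX^l_t,\tilde x)$; transporting both sides through $m$ and $m_t$ produces $\dim(X^j,x)=\dim(X^i_t,x)+\dim(S,t)$. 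The ``in particular'' clause follows because $\dim(X_t,x)>0$ excludes $0$-dimensional (isolated-point) components through $x$, so every $\dim(X^i_t,x)\ge 1$.

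The main obstacle is the equality $r=s$ together with the uniqueness of $i(j)$, i.e.\ that each component $X^j$ of $X$ through $x$ contains exactly one component of $X_t$ through $x$. Surjectivity (at least one such $X^i_t\subset X^j$) is clear from the correspondence above, so the content is injectivity. Upstairs this amounts to showing that the fibre $\wX^l_t$ of a single irreducible normal component of the normalization meets $m^{-1}(x)$ in only one irreducible component. Here I would use that $A$ is a normal domain, that $\wX^l\to\Spec A$ is flat with $\wX^l$ normal and irreducible dominating $S$ (going-down), and combine this with the flat dimension formula to pin the points of $m^{-1}(x)$ lying on $\wX^l$ to a single fibre component. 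This branch-counting step, which genuinely requires the normality of $S$, is the crux of the proof and the part I expect to demand the most care.
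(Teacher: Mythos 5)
Parts (1)--(3) of your proposal are correct and substantively the same as the paper's. For (1) the paper routes the argument through Lemma \ref{lem.birat} on total quotient rings, whereas you argue directly over the Artinian local rings $\ko_{S,t}$ at generic points $t$ via Nakayama and the $\mathrm{Tor}$-sequence; both use exactly the same inputs (finiteness of $m$, flatness of $\wt{f}$, birationality of the fibre maps over generic points of $S$), and your treatment of (2) and (3) coincides with the paper's.

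Part (4) has a genuine gap, which you flag yourself: the equality $r=s$ and the uniqueness of $i(j)$ are the actual content of the statement, and you only sketch a strategy for them. Moreover, the sketched strategy cannot be completed: you propose to show that the points of $m^{-1}(x)$ lying on a single irreducible component $\wX^l$ of $\wX=\oX$ all lie on one irreducible component of the fibre $\wX^l_t$. That claim fails. Take $\wX=\A^2\sm\{0\}$ with $\wt{f}(u,v)=uv$ (flat, all fibres normal, the fibre over $0$ being the two disjoint punctured axes), and let $m:\wX\to X$ pinch the two points $(0,1)$ and $(1,0)$ to one point $x$; then $f:X\to S=\A^1$ is flat and $m$ is a fiberwise normalization, yet the single component $\wX$ meets $m^{-1}(x)$ in two points lying on the two distinct components of $\wX_0$. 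The paper's argument is different: it identifies $s$ with $\#\,m^{-1}(x)$ and $r$ with $\#\,m_t^{-1}(x)$ and observes that these are the \emph{same finite set}, so $r=s$ and the assignment $j\mapsto i(j)$ comes from matching each point of $m^{-1}(x)$ with its unique component upstairs in $\wX$ and in $\wX_t$ (uniqueness from normality). Be aware that this identification reads ``irreducible components through $x$'' as the branches of $X$, resp. $X_t$, at $x$ (i.e. the points of the normalization over $x$); under the literal global reading it requires each component of $X$ through $x$ to be unibranch there, which is exactly what the pinching example violates. In any case your proposal contains neither this counting argument nor a workable substitute, so the crux of (4) is unproved; only your derivation of the dimension formula itself (transport $\dim$ through the finite birational $m$ and $m_t$ and apply the flat dimension formula to $\wt{f}$ on a connected component of $\wX$) agrees with the paper.
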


The dimension formula in (4) is not a direct consequence of the flatness of $f$, since the restriction of a flat map to an irreducible component need not be flat; it is a consequence of the existence of a fiberwise normalization.


\begin{proof} 
(1) A flat morphism $f$ maps a generic point $x \in X$ to a generic point $f(x)\in S$, since flat mappings have the going down property (\cite{Stack} Lemma 10.38.19, tag  00HS).
Since $m_t: \wt{f}^{-1}(t)  \to f^{-1}(t), t\in f(X)$, induces a bijection of generic points of the fibers and an isomorphism of their local rings, this holds also for 
$m$ by the following Lemma \ref{lem.birat} (see also \cite[Theorem 2.3]{CL06}), since the minimal primes of a ring correspond uniquely to the generic points of its spectrum.
 
(2) The first statement follows from  \cite[Corollary of Theorem 23.9]{Mat86}. The second statement follows since $m$ is finite by definition and birational by (1).

(3) Since $m_t$ is finite and birational it is the normalization map.

(4) Since $S$ is normal, $m$ is the normalization map by (2) and thus the number $r$ of irreducible components $X$ passing through $x$ equals $\# m^{-1}(x)$. In the same way  $s= \# m_t^{-1}(x)$ holds. Since  
$\# m^{-1}(x) =\# m_t^{-1}(x)$ we get the first statement of (2).
For each $X^j$ there exists a unique point $\tilde x^j \in m^{-1}(x)\cap m^{-1}(X^j)$ and we get for $i=i(j)$
$$\dim (X^j,x) = \dim(\wX, \tilde x^j) = \dim(\wX_t, \tilde x^j) + \dim(S, t) = \dim (X^i_t,x)  + \dim(S, t),$$
since $\wt{f}$ is flat.
Finally, $\dim(X_t,x) > 0$ means of course that all irreducible components $X^i_t$ have dimension $> 0$ at $x$ and the last statement follows.
\end{proof}

\begin{lemma}\label{lem.birat}
Let $A$ be a ring and $R$ a flat $A$-algebra. Then the total ring of fractions satisfies $Q(R)=Q(R\otimes_AQ(A))$. 
Let $\wR$ be a flat $A$-algebra
and $\mu: R\to \wR$ an $A$-algebra homomorphism
inducing an isomorphism $R\otimes_AQ(A)\cong \wR \otimes_AQ(A)$. Then $\mu$ induces an isomorphism $Q(R)\cong Q(\wR)$
and $\mu: R\to \wR$ is birational. 
\end{lemma}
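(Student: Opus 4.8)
The statement comprises three assertions, of which the first is the technical core and the other two follow formally. The plan is to first establish the identity $Q(R)=Q(R\otimes_A Q(A))$, then transport it across the given isomorphism to obtain $Q(R)\cong Q(\wR)$, and finally read off birationality from the behaviour of total quotient rings at the minimal primes.

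\emph{Step 1 (the core identity).} Let $\Sigma_A\subset A$ be the multiplicative set of non-zero-divisors, so $Q(A)=\Sigma_A^{-1}A$ and $R\otimes_A Q(A)=\Sigma^{-1}R$, where $\Sigma$ denotes the image of $\Sigma_A$ in $R$. First I would observe that flatness forces $\Sigma$ to consist of non-zero-divisors of $R$: for $a\in\Sigma_A$ multiplication by $a$ is injective on $A$, and tensoring this injection with the flat module $R$ keeps it injective, so the image of $a$ in $R$ is a non-zero-divisor. Thus $R\otimes_A Q(A)$ is a localization of $R$ at a set of non-zero-divisors, and it remains to check that such a localization leaves the total quotient ring unchanged. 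For this I would prove the auxiliary fact that, writing $\Sigma_R$ for the non-zero-divisors of $R$, one has $Q(T^{-1}R)=Q(R)$ for every multiplicative set $T\subset\Sigma_R$. The decisive point is to identify the non-zero-divisors of $T^{-1}R$: because the inverted elements are themselves non-zero-divisors, a fraction $r/s$ is a non-zero-divisor in $T^{-1}R$ if and only if $r\in\Sigma_R$; hence localizing $T^{-1}R$ at its non-zero-divisors amounts to localizing $R$ at all of $\Sigma_R$, giving $Q(T^{-1}R)=\Sigma_R^{-1}R=Q(R)$. Taking $T=\Sigma$ yields $Q(R\otimes_A Q(A))=Q(R)$, and the same argument gives $Q(\wR\otimes_A Q(A))=Q(\wR)$.

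\emph{Step 2 (the isomorphism of quotient rings).} The total quotient ring is functorial for ring \emph{isomorphisms}, since an isomorphism carries non-zero-divisors to non-zero-divisors; hence the given isomorphism $R\otimes_A Q(A)\cong \wR\otimes_A Q(A)$ induces an isomorphism of their total quotient rings. Combined with the two identities of Step 1 this yields $Q(R)\cong Q(\wR)$. I would take care to record that the maps $R\to Q(R)$ and $\wR\to Q(\wR)$, the homomorphism $\mu$, its base change $\mu\otimes Q(A)$, and the induced isomorphism all fit into one commutative diagram; this is what guarantees that the resulting isomorphism $Q(R)\cong Q(\wR)$ is genuinely the one induced by $\mu$, a point that is needed in Step 3.

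\emph{Step 3 (birationality) and the main obstacle.} Since the rings are Noetherian, every minimal prime consists of zero-divisors and therefore survives in $Q(R)=\Sigma_R^{-1}R$; concretely, the localization map induces a bijection between the minimal primes of $R$ and those of $Q(R)$, together with isomorphisms $R_\fp\cong Q(R)_{\fp Q(R)}$ on the corresponding local rings, and likewise for $\wR$. Passing the isomorphism $Q(R)\cong Q(\wR)$ of Step 2 through these two bijections gives a bijection between the minimal primes of $\wR$ and those of $R$, i.e. between the generic points of $\Spec\wR$ and $\Spec R$, together with isomorphisms on the corresponding local rings. A diagram chase with the square of Step 2 identifies this bijection with contraction along $\mu$ and the local isomorphisms with the localizations of $\mu$, which by the definition recalled in the footnote is exactly the statement that $\Spec\mu$ is birational. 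The main obstacle is the auxiliary fact in Step 1 that localizing at non-zero-divisors does not change the total quotient ring; the only genuinely subtle point there is that a zero-divisor of $R$ cannot become a non-zero-divisor in $T^{-1}R$, which is precisely where one uses that the inverted elements are non-zero-divisors. Everything afterwards is formal, the one thing to watch being the compatibility bookkeeping of Step 2, without which the abstract isomorphism $Q(R)\cong Q(\wR)$ would not translate into birationality of $\mu$ itself.
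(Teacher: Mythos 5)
Your proof is correct and follows essentially the same route as the paper: flatness sends non-zero-divisors of $A$ to non-zero-divisors of $R$, so $R\otimes_A Q(A)$ is a localization of $R$ at non-zero-divisors with $Q(R\otimes_A Q(A))=Q(R)$, and birationality is then read off from the bijection between the minimal primes of $R$ (resp.\ $\wR$) and those of $Q(R)\cong Q(\wR)$ together with the isomorphisms $R_\fp\cong Q(R)_{\fp Q(R)}$. Your explicit verification that inverting non-zero-divisors leaves the total quotient ring unchanged spells out a step the paper leaves implicit, and your use of minimal primes in place of the paper's appeal to all associated primes is an inessential variation.
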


\begin{proof} Let $R$ be an $A$-algebra via $\vp:A\to R$. 
 $R\otimes_AQ(A)=\{\frac{r}{\vp(a)} | r\in R \text{ and }a\in A \text{ a non-zerodivisor}\}$. Since $\vp$ is flat,  it maps non-zero divisors to non-zero divisors. Hence $R\otimes_AQ(A) \subset Q(R)$ and
 $Q(R)=Q(R\otimes_AQ(A))$. Similarly we obtain
  $Q(\wR)=Q(\wR\otimes_AQ(A))$. 
  
By assumption $\mu$ induces an isomorphism $R\otimes_AQ(A)\cong \wR \otimes_AQ(A)$ and it follows that $\mu$ induces an isomorphism $Q(R)\cong Q(\wR)$. 
Let
 $\frak{p}_1,\ldots,\frak{p}_r$ be the associated prime ideals of $R$.
 Then $\fp_1\cup ... \cup\fp_r$ is the set of zero divisors of $R$ and
 $\frak{p}_1Q(R),\ldots,\frak{p}_rQ(R)$ are the prime ideals of $Q(R)$
 (\cite[Theorem 4.1]{Mat86}). Here the minimal associated primes of $R$ correspond to the minimal primes of $Q(R)$ (also the embedded associated primes of $R$ correspond to the embedded primes of $Q(R)$, but this is not relevant for us),  and similarly for  $Q(\wR)$.
 Since $Q(R)\cong Q(\wR)$ the minimal prime ideals of $Q(R)$ and $Q(\wR)$  are in 1-1 correspondence. Since for a minimal prime $\frak{p}$ of $R$ we have $R_{\frak{p}}=Q(R)_{\frak{p}Q(R)}$ we obtain all together a bijection between the minimal primes of $R$ and $\wR$ and an isomorphism of the corresponding local rings. This implies that $\mu:R\longrightarrow \tilde R$ is birational.  \end{proof}

We want to characterize fiberwise resp. simultaneous normalization of a family of INNS numerically by the $\del$-invariant of the fibers. The following example shows that we have to be careful with families of affine fibers.

\begin{example} \label{ex.ker}
{\em Consider Example \ref{ex.nnormal2} (2) with
$R = A[y]/\langle y(xy-1)\rangle$ and $A=\k[x]$.  The canonical map $A\to R$ is flat, with normal fibers, and hence the identity $R\to R$ is a fiberwise normalization (resp. simultaneous normalization if char$(\k)=0$). But the $\del$-invariant is not constant ($\del_\k(R(0)) = -1$ and $\del_\k(R(s)) = -2$ for $s\in \k-\{0\}$). The same holds for  $A=\k[x]_{\x}$ and $s$ the generic point of $\Spec A$. This does not contradict the following Theorem \ref{thm.simnormPID} since $\Ker(\nu^{>1})$ is not finite over $A$. For another example with all fibers reduced curves, see Example \ref{ex.affine}.
}
\end{example}

The following theorem resp. its corollary is a numerical characterization for the existence of a fiberwise  normalization of a family of isolated normal singularities over the spectrum of a PID.

\begin{theorem}\label{thm.simnormPID}
Let  $\vp: A \to R$  be a flat morphism of rings with $A$ a principal ideal domain and $R$ Nagata. Let  $\nu :R \to \oR$ be the normalization. Set  $S=\Spec A$, $X =\Spec R$,  $\oX =\Spec \oR$,
$f = \Spec \vp: X \to S$, $n =\Spec \nu :\oX \to X$.
Assume that
$\Coker(\nu)$ and $\Ker (\nu^{>1}: R\to \ol{R^{>1}})$ are finite over $A$ and that
for each $t\in f(X)$ the fiber $X_t $ has finitely many isolated non-normal singularities with residue fields finite over $k(t)$.
\begin{enumerate}
\item Assume that $f$ admits a fiberwise normalization. Then 
$\del_{k(t)}(X_t)$ is constant on $S$. 
\item 
$\del_{k(t)}(X_t)$ is  constant on $S$ if and only if $f^{>1}:X^{>1} \to S$ admits a fiberwise normalization.
\item If  $X$ has no 1-dimensional irreducible components then $f$ admits a fiberwise normalization if and only if $\del_{k(t)}(X_t)$ is  constant on $S$.
\end{enumerate}
\end{theorem}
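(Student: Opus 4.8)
The plan is to reduce all three parts to the semicontinuity result of Theorem~\ref{thm.semicont1}, exploiting two structural facts. First, the base $S=\Spec A$ is normal, since a PID is a UFD and hence integrally closed; by Lemma~\ref{lem.base}(2) this forces \emph{any} fiberwise normalization to agree with the normalization $n\colon\oX\to X$. Second, I will lean on the identity of Theorem~\ref{thm.semicont1}(3),
$$\del_{k(s)}(X_s)-\del_{k(\eta)}(X_\eta)=\del_{k(s)}((\ol{X^{>1}})_s)\geq 0,$$
which pins the entire question on whether the positive-dimensional fibers $(\oX^{>1})_s$ are normal. Throughout I use that the generic point $\eta$ lies in every nonempty open subset of the irreducible scheme $S$ and in $f(X)$, so that taking $t=\eta$ in the neighbourhood formulas of Theorem~\ref{thm.semicont1} controls $\del_{k(s)}(X_s)$ globally, not just locally.

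For (1), given a fiberwise normalization $m\colon\wX\to X$, Lemma~\ref{lem.base}(1) shows $m$ is birational, and since $S$ is everywhere normal Lemma~\ref{lem.base}(2) gives that $\wX$ is normal, whence $\wX\cong\oX$ and $m=n$. Thus the fibers $(\oX)_t$ are normal for all $t\in f(X)$, so their positive-dimensional parts $(\oX^{>1})_t=(\oX)_t^{>0}$ are normal of positive dimension and hence have vanishing delta by Remark~\ref{rm.eps}(3). Substituting $\del_{k(s)}((\ol{X^{>1}})_s)=0$ into the displayed formula yields $\del_{k(s)}(X_s)=\del_{k(\eta)}(X_\eta)$ for every $s$, i.e. constancy.

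For (2), the ``if'' direction is identical after replacing $f$ by $f^{>1}$: a fiberwise normalization of $f^{>1}$ must be $n^{>1}\colon\oX^{>1}\to X^{>1}$, its fibers $(\oX^{>1})_t$ are normal, and the displayed formula again forces $\del_{k(s)}(X_s)=\del_{k(\eta)}(X_\eta)$. For the ``only if'' direction, assume $\del_{k(t)}(X_t)$ is constant. The displayed formula then gives $\del_{k(s)}((\oX^{>1})_s)=0$ for all $s$, while Theorem~\ref{thm.semicont1}(5) (equivalently the depth argument of Lemma~\ref{lem.flat}(3), using that $\oX^{>1}$ is normal of dimension $\geq 2$) shows each $(\oX^{>1})_t$ is reduced; a reduced ring with vanishing delta is normal by Remark~\ref{rm.eps}(3), so all fibers of $\ol{f^{>1}}=f^{>1}\circ n^{>1}$ are normal. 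It then remains to verify the defining properties of a fiberwise normalization for $n^{>1}$: it is finite because $R$ is Nagata, the composition $\ol{f^{>1}}$ is flat by Lemma~\ref{lem.flat}(1)(ii), and the induced fiber maps $n^{>1}_t$ are birational because, by Proposition~\ref{prop.fibers}, they are partial normalizations of the positive-dimensional fibers $(X^{>1})_t$, which contain no isolated points. To invoke Proposition~\ref{prop.fibers} for $\mu=\nu^{>1}\colon R^{>1}\to\oR^{>1}$ I will record that its hypotheses hold: $Ker(\nu^{>1})=0$ since $R^{>1}$ is reduced, and $Coker(\nu^{>1})$ is finite over $A$ as a quotient of the finite $A$-module $Coker(\nu)$.

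Finally, (3) follows immediately from (2): flatness over the domain $A$ forces every irreducible component of $X$ to dominate $S$ and hence to have dimension $\geq\dim S$, so the hypothesis that $X$ has no $1$-dimensional component gives $X=X^{>1}$ and $f=f^{>1}$, whereupon the equivalence of (2) is verbatim the equivalence asserted in (3). I expect the main obstacle to be the ``only if'' direction of (2): upgrading the numerical equality $\del_{k(s)}((\oX^{>1})_s)=0$ to the geometric statement that $n^{>1}$ is a genuine fiberwise normalization requires \emph{simultaneously} the reducedness of the fibers (Serre's $(S_2)$ on the normal $\oX^{>1}$, via Lemma~\ref{lem.flat}) and the fiberwise birationality extracted from the partial-normalization statement of Proposition~\ref{prop.fibers}; combining these with Remark~\ref{rm.eps}(3) to pass from ``reduced with $\del=0$'' to ``normal'' is the crux that links $\del$-constancy to normal fibers.
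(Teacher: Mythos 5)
Your argument for parts (1) and (2) follows the paper's own proof in all essentials: you use that $S$ is normal to force any fiberwise normalization to be the normalization (Lemma \ref{lem.base}), and you use the identity $\delta_{k(s)}(X_s)-\delta_{k(\eta)}(X_\eta)=\delta_{k(s)}((\ol{X^{>1}})_s)$ from Theorem \ref{thm.semicont1} together with the reducedness of $(\oX^{>1})_t$ (Theorem \ref{thm.semicont1}(5), via the depth argument) to pass between $\delta$-constancy and normality of the fibers of $\ol{f}^{>1}$. The only cosmetic difference is that you route part (1) through Theorem \ref{thm.semicont1}(3) with $t=\eta$ rather than through part (2) of that theorem; the verifications of finiteness, flatness and fiberwise birationality for $n^{>1}$ are also the ones the paper uses.

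There is, however, a genuine gap in your part (3). You claim that the absence of $1$-dimensional components gives $X=X^{>1}$ and $f=f^{>1}$, but $X^{>1}=\Spec R/\fp^{>1}$ is \emph{reduced by definition} (Notation \ref{not.>}), whereas the theorem makes no reducedness assumption on $X$ --- indeed the whole point of the paper is to allow non-reduced fibers and total spaces. What the hypothesis actually gives is $X^{>1}=X^{red}$ (granting no $0$-dimensional components), so part (2) only produces a fiberwise normalization of $f^{red}\colon X^{red}\to S$, and you still owe the step from $X^{red}$ to $X$: one must check that the composite $\oX\to X^{red}\hookrightarrow X$ satisfies the conditions of Definition \ref{def.simnorm} for $f$ itself, in particular that the fiber maps $(\oX)_t\to X_t$ (not merely $(\oX)_t\to (X^{red})_t$) are birational. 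This works because $\nil(R)=Ker(\nu)$ is finite over $A$, so $\nil(R)\otimes_A k(t)$ is supported at finitely many closed points of $X_t$ and $X_t$ is reduced at all of its generic points (these being normal points of the INNS fiber); the paper makes exactly this passage explicitly. As written, your reduction is false for non-reduced $X$, even though the repair is short.
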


\begin{proof}
(1) Since $A$ is regular, hence normal, any fiberwise normalization is the normalization by Lemma \ref{lem.base}. Since the fibers of $\ol{f}=f\circ n$ are  normal by assumption,
  $(\ol{X}^{>1})_t$ is normal and has positive dimension. Hence $\del_{k(t)} ((\oX^{>1})_t) =0$ for $t\in S$. The constancy of $\del_{k(t)}(X_t)$ follows from Theorem \ref{thm.semicont1} (2) (resp. Corollary \ref{cor.nagata}).
 
(2) If $\del_{k(t)}(X_t)$ is constant then $\del_{k(t)}(X_t) = \del_{k(\eta)}(X_\eta)$ for $t\in \Spec A$ and $\eta$ the generic point of $\Spec A$.  By Theorem \ref{thm.semicont1} (3) (applied to any $t\in U$) 
$\del_{k(t)}((\ol{X^{>1}})_t) =0$ and $(\ol{X^{>1}})_t$ is reduced. Hence 
$(\ol{X^{>1}})_t$ is normal and equal to the normalization of $(X^{>1})_t$ (since $(\ol{X^{>1}})_t \to (X^{>1})_t$ is  birational and finite).
Since $\ol{f}$ is flat by Lemma \ref{lem.flat}, $\ol{f}^{>1}= f| \ol{X}^{>1}$ is also flat and $n^{>1}:\oX^{>1} \to X^{>1}$
is a fiberwise normalization of $f^{>1}$. 

By Theorem \ref{thm.semicont1} (1) $\del_{k(t)}(X_t)$ is constant 
$\iff$ $\del_{k(t)}((X^{red})_t)$ is constant $\iff$ $\del_{k(t)}((X^{>1})_t)$ is constant. If  $f^{>1}$ admits a fiberwise normalization, then $\del_{k(t)}((X^{>1})_t)$ is constant by (1) and thus $\del_{k(t)}(X_t)$ is constant.

(3)  The "only if " direction follows from (1). 
$X^1 =\emptyset$  means  $r_1(X)=0$ and $f^{>1}= f^{red} :X^{red} \to S$. By (2) $\del_{k(t)}(X_t)$ = constant implies that $\oX \to X^{red}$ is a
fiberwise normalization of $f^{red}$. But then  $\oX \to X$ is a fiberwise normalization of $f$ (the flatness of $f$ implies the flatness of  $f^{red}$ and
$\ol{f}: \oX \to X \to S$ by Lemma \ref{lem.flat}).
\end{proof}

\begin{corollary}\label{cor.simnormPID}
In addition to the assumptions of Theorem \ref{thm.simnormPID} assume that all residue fields $k(\fp)$, $\fp \in \Spec A$, are perfect (e.g., $A=\Z$ or $A$ a $\k$-algebra with $char (\k) =0$). Then the statements (2) and (3) of Theorem \ref{thm.simnormPID} hold with 'fiberwise normalization' replaced by 'simultaneous normalization'.
\end{corollary}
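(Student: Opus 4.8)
The plan is to deduce this from Theorem \ref{thm.simnormPID} by showing that, under the perfectness hypothesis on the residue fields of $A$, the notions of \emph{fiberwise normalization} and \emph{simultaneous normalization} of the morphisms in question coincide; once that is established, statements (2) and (3) transfer verbatim with no additional argument. First I would compare the two clauses of Definition \ref{def.simnorm} and isolate their single point of difference: a fiberwise normalization $m\colon \wX \to X$ of $f$ requires the non-empty fibers of $\wt f = f\circ m$ to be \emph{normal}, whereas a simultaneous normalization requires them to be \emph{geometrically normal}; all other conditions (finiteness of $m$, flatness of $\wt f$, birationality of the fiber maps) are identical.

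Next I would fix a point $\fp \in f(X) \subseteq S = \Spec A$ and note that the corresponding fiber of $\wt f$ is a $k(\fp)$-algebra, where $k(\fp)$ is perfect by assumption. Invoking the standard fact recalled right after Definition \ref{def.simnorm} --- that over a perfect field normality and geometric normality agree (\cite[tag 037Z, Lemma 10.160.1]{Stack}) --- I would conclude that this fiber is normal if and only if it is geometrically normal. Since $\fp$ was an arbitrary point of $f(X)$, this shows that a finite birational $m\colon \wX \to X$ with $\wt f$ flat is a fiberwise normalization of $f$ if and only if it is a simultaneous normalization of $f$; in particular $f$ \emph{admits} a fiberwise normalization if and only if it admits a simultaneous normalization. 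The same reasoning applies verbatim to $f^{>1}\colon X^{>1} \to S$, whose fibers are again $k(\fp)$-algebras over the perfect fields $k(\fp)$.

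Finally I would feed this equivalence into Theorem \ref{thm.simnormPID}: in statement (2) I replace ``$f^{>1}$ admits a fiberwise normalization'' by ``$f^{>1}$ admits a simultaneous normalization,'' and in statement (3) I replace ``$f$ admits a fiberwise normalization'' by ``$f$ admits a simultaneous normalization,'' both of which are legitimate by the coincidence just proved, yielding exactly the asserted statements. I do not expect a genuine obstacle here, since the corollary is essentially a translation of terminology; the only point requiring care is to confirm that the perfectness hypothesis really does cover every fiber that occurs, i.e.\ that the base points range over all of $\Spec A$ and that the listed examples ($A = \Z$, or a $\k$-algebra with $\mathrm{char}(\k)=0$) do have all residue fields $k(\fp)$ perfect --- for $\Z$ these are the $\F_p$ and $\Q$, and in characteristic $0$ every field is perfect, so the hypothesis is satisfied in both cases.
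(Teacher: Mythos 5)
Your proposal is correct and matches the paper's (implicit) argument exactly: the paper treats this corollary as an immediate consequence of the remark following Definition \ref{def.simnorm}, namely that over perfect residue fields the fibers of $\wt f$ are normal iff geometrically normal, so fiberwise and simultaneous normalization coincide and the statements of Theorem \ref{thm.simnormPID} transfer verbatim. Your verification that the listed examples indeed have all residue fields perfect is a sensible extra check but not a substantive addition.
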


We conjecture that Theorem \ref{thm.simnormPID} and Corollary 
\ref{cor.simnormPID} hold if $A$ is normal. However, at the moment we don't have a proof as the non-reducedness of the fibers creates severe technical difficulties for non-PIDs as base rings.

\section{Characteristic 0 and the Analytic Case}\label{sec.4}

If the characteristic is zero then the assumptions of the semicontinuity theorem (Theorem \ref{thm.semicont1}) can be weakened and the statement is stronger for morphisms of finite type and for analytic morphisms (using theorems of Bertini and Sard type). The main property is that reduced spaces are regular (hence normal) on an open dense subset.
\begin{theorem} \label{thm.char0}
Let $\k$ be a field of characteristic zero and let $\vp:A\to R$ be a flat morphism of $\k$-algebras.  Assume that $A$ is a PID and that $R$ is of finite type over $\k$. 
Let $\nu :R \to \oR$ be the normalization and assume that   $\Ker(\nu^{>1})$ and $\Coker(\nu)$ are finite over $A$. 
Let $X =\Spec R$,  $\oX =\Spec \oR$,
$n =\Spec \nu :\oX \to X$, $f =\Spec \vp : X \to S=\Spec A$ and
$\ol{f} =f\circ n : \oX \to S$.
Let $X_s =f^{-1}(s)$, $s\in f(X),$ have only finitely many non-normal points. 

Then  $\eps_{k(s)}(X_s)$ and  $\del_{k(s)}(X_s)$ are finite for $s\in S$ and
 each $s\in S$ has an open neighbourhood $U$ such that 
 for $t \in U\sm\{s\}$   the following holds:
\begin {enumerate}
\item $(\oX^{>1})_s $ is reduced and $(\oX)_t$ is regular, hence normal.
\item $\delta_{k(s)}(X_s)-\delta_{k(t)}(X_{t}) =\delta_{k(s)}((\ol{X^{>1}})_s)\geq 0$,
\item $\eps_{k(s)}(X_s)-\eps_{k(t)}(X_t) =\eps_{k(s)}(({X}^{>1})_s)\geq 0.$
\end{enumerate}
\end{theorem}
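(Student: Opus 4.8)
The plan is to reduce everything to Theorem~\ref{thm.semicont1} and to supply a single characteristic-zero ingredient. First I would check that all hypotheses of Corollary~\ref{cor.nagata} hold: since $R$ is of finite type over the field $\k$ it is Nagata (Remark~\ref{rm.nagata}), so $\nu\colon R\to\oR$ is finite; each fiber ring $R(s)=R\otimes_A k(s)$ is of finite type over $k(s)$, hence Nagata with finite normalization, and its non-normal points are closed with residue fields finite over $k(s)$. Thus Corollary~\ref{cor.nagata} applies and already yields the flatness of $\ol f$, the finiteness of $\eps_{k(s)}(X_s)$ and $\del_{k(s)}(X_s)$, the reducedness of $(\oX^{>1})_s$ in~(1), and all conclusions of Theorem~\ref{thm.semicont1}. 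In particular statement~(3) is verbatim Theorem~\ref{thm.semicont1}(4), and Theorem~\ref{thm.semicont1}(2) would give~(2) word for word as soon as I know that $(\ol{X^{>1}})_t$ is normal for $t\in U\sm\{s\}$. So the entire theorem reduces to producing a neighbourhood $U$ of $s$ over whose punctured part the fibers of $\ol f$ are normal (and, for the sharp form of~(1), regular).

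The heart of the argument is this normality of the nearby fibers, and it is exactly here that $\operatorname{char}\k=0$ enters. By Lemma~\ref{lem.red}(1), since $\oR^{>1}$ is normal and $\ol f^{>1}\colon\oX^{>1}\to S$ is flat, the generic fiber $(\oX^{>1})_\eta=\oR^{>1}\otimes_A Q(A)$ is normal; as $k(\eta)=Q(A)$ is perfect in characteristic zero, it is even geometrically normal. The locus of points of $S$ over which the fibers of the flat, finite-type morphism $\ol f^{>1}$ are geometrically normal is constructible, and it contains $\eta$ by the previous sentence; since $S$ is irreducible, a constructible set containing its generic point contains a dense open $V\subseteq S$. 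Because $A$ is a PID, $\dim S\le 1$, so $S\sm V$ is a finite set of closed points and $U:=V\cup\{s\}$ is an open neighbourhood of $s$ with $(\ol{X^{>1}})_t$ geometrically normal, in particular normal, for all $t\in U\sm\{s\}$. For the stronger ``$(\oX)_t$ regular'' in~(1) I would feed in a Sard/generic--smoothness argument on the regular locus of $\oX$: $\oX$ is reduced of finite type over $\k$ with $\operatorname{char}\k=0$, so its regular locus is dense open and the smooth locus of $\ol f$ is open; generic smoothness then makes $(\oX)_t$ regular for $t$ in a (possibly smaller) dense open, while the finite flat part $\oX^1\to S$ contributes only reduced, hence regular, $0$-dimensional fibers over generic $t$. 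Shrinking $U$ accordingly gives~(1).

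With such a $U$ fixed, the two semicontinuity statements are immediate applications of Theorem~\ref{thm.semicont1}: statement~(2) follows from Theorem~\ref{thm.semicont1}(2) using the normality of $(\ol{X^{>1}})_t$ for $t\neq s$, giving $\del_{k(s)}(X_s)-\del_{k(t)}(X_t)=\del_{k(s)}((\ol{X^{>1}})_s)\ge 0$, and statement~(3) is Theorem~\ref{thm.semicont1}(4). I expect the only real obstacle to be the passage from the generic point to a genuine punctured neighbourhood of $s$, i.e. showing that the good locus on the base is constructible and contains $\eta$, so that it fills out a dense open rather than just a generalization; this is precisely what upgrades the generalization-only statement of Theorem~\ref{thm.semicont1}(3) to the full neighbourhood statements~(1)--(2). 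Characteristic zero is indispensable here: Example~\ref{ex.nnormal2}(1) exhibits a flat family whose generic fiber is regular but \emph{not} geometrically normal, so that the good locus fails to contain $\eta$ and no such $U$ exists---exactly the pathology that the perfectness of $k(\eta)$ rules out in our setting.
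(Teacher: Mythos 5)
Your proposal is correct and follows the same overall strategy as the paper: reduce everything to Theorem~\ref{thm.semicont1} (via Corollary~\ref{cor.nagata}, using that finite type over $\k$ gives Nagata and, by the Nullstellensatz, finite residue fields at the non-normal points), so that (3) is Theorem~\ref{thm.semicont1}(4) and (2) follows from Theorem~\ref{thm.semicont1}(2) once the nearby fibers of $\ol{f}^{>1}$ are known to be normal. Where you genuinely diverge is in how that normality is produced. The paper goes straight for regularity: since $\oX$ is reduced and of finite type over a field of characteristic $0$, its smooth locus $V$ is dense open, $\ol f(V)$ is open dense, and generic smoothness (Sard/Vakil 25.3.3) gives a cofinite $W\subset S$ over which the fibers are smooth; normality of $(\ol{X^{>1}})_t$ is then a byproduct. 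You instead argue normality directly: Lemma~\ref{lem.red}(1) makes the generic fiber normal, perfectness of $Q(A)$ upgrades this to geometrically normal, and the EGA-type constructibility of the geometrically-normal-fiber locus on the base (a fact the paper never invokes) spreads this out to a dense open, hence cofinite, subset of $S$; you then reserve generic smoothness only for the cosmetic ``regular'' claim in (1). Your route has the advantage of isolating exactly what (2) and (3) need -- normality, not regularity -- and of making transparent that characteristic $0$ enters only through perfectness of $k(\eta)$, dovetailing with Example~\ref{ex.nnormal2}(1) as you note; the cost is an appeal to a constructibility theorem external to the paper. One shared caveat: for the ``$(\oX)_t$ regular'' part of (1), both you and the paper apply generic smoothness to $V\cap \ol f^{-1}(W)\to W$, which only controls the part of the fiber lying in the smooth locus $V$ of $\oX$; to conclude that the \emph{whole} fiber is regular one must additionally rule out that $\oX\sm V$ dominates $S$ (think of $\Spec \k[x,y,z,t]/\langle xy-z^2\rangle$ over $\k[t]$, where every fiber is normal but none is regular). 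Since this does not affect the normality of $(\ol{X^{>1}})_t$, statements (2) and (3) -- the substance of the theorem -- are unaffected in your write-up.
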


\begin{proof}
Since $X$ is of finite type over $\k$,  any fiber $X_s$, $s \in f(X)$, is of finite type over $k(s)$. Since the non-normal points $x$ of $X_s$ are closed in $X_s$, the field extension  $k(s) \subset k(x)$ is finite (by Hilbert's Nullstellensatz). Moreover, X
is Nagata by Remark \ref{rm.nagata} and hence $\oX$ is finite over $X$, as well as the normalization  $\ol{(X_s)}$ 
over $X_s$. 
From Theorem \ref{thm.semicont1} (4) and (5) we get that statement  (3) holds and that $(\oX^{>1})_s $ is reduced for $s\in f(X)$, while statement (2) follows from  Theorem \ref{thm.semicont1} (2) together with the regularity of $(\oX)_t, t\neq s$ .

It remains to show that  $(\oX)_t$ is regular for $t\neq s$ in some neighbourhood of $s$.
Since $f$ is flat, the map $\ol{f}:\oX \to S$ and its restriction to any irreducible component of $\oX$  is flat by Lemma \ref{lem.flat} and dominant ($A\to \oR$ is injective).
Since $\oX$ is reduced, there exists an open dense subset $V\subset \oX$ such that $V$ is smooth (\cite[Theorem 21.3.5]{Va17}) and, since $\k$ is perfect, a point $x$ is smooth iff its local ring is regular (\cite[Lemma 32.25.8.,tag 0B8X]{Stack}). 
 Since $\overline f$ is flat and of finite presentation, it follows, that  $\ol{f} (V)$ is an open subset of $S$ (\cite[Proposition 10.40.8.,tag 00I1]{Stack}). Since it is not empty, it is dense.
%

By (\cite[Theorem 25.3.3]{Va17}) there exists an open dense subset $W\subset S$ such that the restriction 
of $\ol{f}$ to the non-empty open set $V\cap \ol{f}^{-1}(W)$ is a smooth morphism. Then $X_t$ is smooth, hence regular for $t\in W$. 
Since $S$ is 1-dimensional, $W$ is the complement of finitely many points,
this implies (1). 
\end{proof}

Let us consider the real and complex analytic case with $\K \in \{\R, \C\}$ and
$$\delta:=\delta_\K, \quad \eps:=\eps_\K.$$ 
 The following theorems were proved in \cite{Gr17} for $\K=\C$. The proofs follow similar arguments as in Section \ref{sec.2} and  \ref{sec.3}. The proofs
  for $\K= \R$ are analogous. 
  
  The analytic case is in some sense technically easier than the general algebraic case: complex spaces are Nagata of characteristic 0, and all points are closed. Moreover, the non-normal locus $\NNor(f)$ of an analytic morphism $f:X\to S$ is a closed analytic subset of $X$ and $f(\NNor(f))$ is neglectable\,\footnote{\, i.e. contained in a countable union of nowhere dense locally closed analytic subsets of S.} in $S$, provided there is an open dense subset $V \subset S$ consisting of smooth points of $S$ such that $f^{-1}(V)$ consists of normal points of $X$. If the restriction of $f$ to $\NNor(f)$ is proper, then "neglectable" can be replaced by "a nowhere dense closed analytic subset" (see \cite[Theorem 2.1(3)]{BF93} for a proof of these statements).
  
For $f:X\to S$ and $t\in S$, if the fiber $X_t =f^{-1}(t)$ has only finitely many isolated non-normal singularities $x_1,...,x_r$, we use the notaion $$\del(X_t)=\sum_i^r\del(X_t,x_i).$$

\begin{theorem}{\em(\cite[Theorem 7.14]{Gr17})}\label{thm.semicont-an}
Let $f:(X,x)\to (\K, 0)$ be flat morphism of \,$\K$-analytic germs with fibre $(X_0, x)$ an isolated non--normal singularity. If $f:X \to T$ is a sufficiently small representative, then $X_t$ has only finitely many INNS and the following holds  for $t\neq 0$.  
\begin{enumerate}
\item $(\oX^{>1})_0$ is reduced and $(\oX)_t$ is smooth.
\item  $\delta(X_0)-\delta(X_t)\\ 
\text{  } =  \delta((X^{red})_0)-\delta((X^{red})_t) \text{ with } (X^{red})_t = (X_t)^{red},\\
\text{  } = \delta((\overline{X})_0)-\delta((\overline{X})_t) \text{ with }  -\delta((\overline{X})_t)= \dim_\K \ko_{(X^1)_t},\\
\text{  } =\delta((X^{>1})_0)-\delta((X^{>1})_t)\\
\text{  } =\delta((\overline{X}^{>1})_0)\geq 0$.

\item  $\varepsilon (X_0)-\varepsilon(X_t)=\varepsilon((X^{>1})_0)\geq 0$.
	\end{enumerate}
Moreover, if $r_1(X,x)=0$ (i.e., $(X,x)$ has no 1-dimensional components, or, equivalently, $X_t$ has no isolated points for $t\ne0$), then
\begin{enumerate}
\item $(\oX)_0$ is reduced and $(\oX)_t$ is smooth.
\item $\delta(X_0)-\delta(X_t)= \delta((\overline{X})_0)\geq 0$.
\item  $\varepsilon (X_0)-\varepsilon(X_t)=\varepsilon((X^{red})_0)\geq 0$.
\end{enumerate}
In particular, if $X$ is reduced, then $\varepsilon(X_t)=0$ and hence $X_t$ is reduced for $t\neq 0$.
\end{theorem} 

Note that for morphisms of  analytic germs we do not need to assume that   $\Ker(\nu^{>1})$ and $\Coker(\nu)$ are finite over $(\K,0)$. This follows already from the assumption that $(X_0, x)$ is an isolated non--normal singularity. Then $\Ker(\nu^{>1})$ and $\Coker(\nu)$ are quasi-finite over $(\K,0)$ and thus have a representative (in the Euclidean topology) which is finite over $\K$.  

For $\K = \C$ we have $\dim_\K \ko_{(X^1)_t} =\# \{$isolated points of $X_t \}$.  Statement (2) is in \cite[Theorem 7.14]{Gr17} formulated with $r_1(X)$ instead of $\dim_\K \ko_{(X^1)_t}$, which is wrong in general. We have $r_1(X)\leq \dim_\K \ko_{(X^1)_t} $ and $r_1(X)=0$ iff $\dim_\K \ko_{(X^1)_t} =0$. 

\begin{theorem} {\em(\cite[Theorem 7.17]{Gr17})} \label{theorem1} Let $f : (X,x) \to (\K,0)$ be flat with $(X_0,x)$ an INNS of dimension $\geq 1$.
Then there exists a small representative $f:X\to S$ of the germ $f$, such that
$f:X\to S$ 	admits a simultaneous normalization if and only if $\delta (X_t)$ is constant and $r_1(X,x) = 0$.
\end{theorem}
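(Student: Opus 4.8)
The plan is to establish Theorem \ref{theorem1} as the analytic counterpart of Theorem \ref{thm.simnormPID}(3), feeding on the already available analytic semicontinuity (Theorem \ref{thm.semicont-an}) together with the structural Lemmas \ref{lem.base} and \ref{lem.flat}. Since a complex (or real) analytic germ is Nagata, of characteristic $0$ and with all points closed, fiberwise and simultaneous normalization coincide (the residue field $\K$ is perfect), and for a sufficiently small good representative $f:X\to S$ the normalization $n:\oX\to X$ is finite (a quasi-finite analytic map admits a finite representative) and the non-normal locus is finite over $S$; the local ring $\ko_{S,0}$ of $(\K,0)$ is a discrete valuation ring, hence a PID. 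Thus the hypotheses under which Theorem \ref{thm.semicont-an} was proved are in force, and I may use its conclusions freely.

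\emph{Necessity.} Suppose $f$ admits a simultaneous normalization $m:\wX\to X$. As $S$ is a smooth curve germ it is normal, so by Lemma \ref{lem.base}(2) the space $\wX$ is normal and $m=n$ is the normalization. Applying the bijection and dimension formula of Lemma \ref{lem.base}(4) at $x$, every irreducible component $X^j$ of $X$ through $x$ corresponds to a component $X^i_0$ of the fibre with $\dim(X^j,x)=\dim(X^i_0,x)+\dim(S,0)=\dim(X^i_0,x)+1$. A $1$-dimensional $X^j$ would therefore force a $0$-dimensional component $X^i_0=\{x\}$ of $X_0$. But $(X_0,x)$ has dimension $\geq 1$, so $x$ lies on a positive-dimensional component $V(\fq)$ of $X_0$ with $\fq$ a minimal prime of $\ko_{X_0,x}$; since $\fq\subseteq\fm$, the maximal ideal $\fm$ cannot itself be a minimal prime, so no such $0$-dimensional component exists. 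Hence $r_1(X,x)=0$. Moreover the fibres of $\ol f=f\circ n$ are normal, so $(\oX^{>1})_0$ is normal and $\delta((\oX^{>1})_0)=0$; by Theorem \ref{thm.semicont-an}(2)(iii) this gives $\delta(X_0)-\delta(X_t)=\delta((\oX^{>1})_0)=0$, i.e. $\delta(X_t)$ is constant.

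\emph{Sufficiency.} Assume $\delta(X_t)$ is constant and $r_1(X,x)=0$. Flatness of $f$ over the DVR $\ko_{S,0}$ means $\vp(t)$ is a non-zerodivisor, which rules out $0$-dimensional components of $X$ (such a component would place $\vp(t)$ in a minimal, hence associated, prime); together with $r_1(X,x)=0$ this shows that every component of $X$ through $x$ has dimension $\geq 2$, so $X^{red}=X^{>1}$ and $\oX=\oX^{>1}$. Constancy of $\delta$ and Theorem \ref{thm.semicont-an}(2)(iii) give $\delta((\oX^{>1})_0)=\delta(X_0)-\delta(X_t)=0$, while Theorem \ref{thm.semicont-an}(1) gives that $(\oX^{>1})_0$ is reduced; a reduced germ with vanishing $\del$ is normal (Remark \ref{rm.eps}(3)), so $(\oX)_0$ is normal, and $(\oX)_t$ is smooth for $t\neq 0$, again by Theorem \ref{thm.semicont-an}(1). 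Thus $\ol f:\oX\to S$ is flat (Lemma \ref{lem.flat}) with normal fibres, $n$ is finite, and each fibre $X_t$ is generically reduced because its nilpotents are concentrated at the finitely many isolated non-normal points (Remark \ref{rm.eps}); hence the normalization $n_t:(\oX)_t\to X_t$ is birational. Therefore $n:\oX\to X$ is a fiberwise normalization of $f$, and as $\K$ is perfect it is a simultaneous normalization.

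The main obstacle I expect is not the formal implications above but the analytic bookkeeping needed to make them rigorous: choosing a good representative on which $n:\oX\to X$ is finite, on which the non-normal locus is finite over $S$, and on which the finiteness of $Coker(\nu)$ and $Ker(\nu^{>1})$ over $\ko_{S,0}$ holds, so that Theorem \ref{thm.semicont-an} genuinely applies; this is exactly where the facts that quasi-finite analytic maps admit finite representatives and that the restriction of $f$ to the non-normal locus is proper must be invoked. A second delicate point, hidden in the word ``birational'', is the verification of condition (d) of Definition \ref{def.simnorm} on the special fibre: it is precisely the possibility of a $1$-dimensional component contributing an extra isolated point upstairs over a non-generic (embedded) point of $X_0$ that is excluded by $r_1(X,x)=0$, and tracking that no such phenomenon survives is the crux of both directions.
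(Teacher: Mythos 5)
Your proposal is correct and follows essentially the same route the paper indicates: the paper defers to \cite{Gr17} but explicitly describes the proof as the analytic analogue of the Section 2--3 arguments over the DVR $\ko_{S,0}$ (finiteness of the normalization and of the non-normal locus over $S$ for a good representative, the semicontinuity of Theorem \ref{thm.semicont-an}, and the dimension formula of Lemma \ref{lem.base}(4) for the necessity of $r_1(X,x)=0$), which is exactly what you carry out. No substantive differences to report.
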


The assumption that $(X_0,x)$ an INNS implies that the non-normal locus of
$f$ (which is analytic in $X$) is finite over $S$ (for $S$ sufficiently small), 
and hence $X_t$ has only isolated non-normal singularities for $t\in S$.
We recall that an excellent local ring $(R,\fm)$ is normal if $R/f$ is normal for $f\in \fm$ a non-zero divisor of $R$ (\cite[Lemma 4.4]{BF93}). Hence, for $f:X\to S$ a flat analytic morphism, with $S$ a smooth 1-dimensional manifold, the non-normal locus of $X$ is contained in the non-normal locus of $f$. 
Let $n:\oX \to X$ be the normalization of $X$. Then the non-normal locus of $X$ is the union of the supports of the sheaves $\Coker (n_*: \ko_X \to n_*\ko_\oX)$ and $\Ker (n_* : \ko_X \to n_*\ko_{\oX})$ and the assumption that $(X_0,x)$ an INNS implies that these sheaves are finite over $S$.
The necessity of $r_1(X,x) = 0$ follows from the dimension formula of the analytic analog of Lemma \ref{lem.base} (4). 
We thus see that analogous assumptions as in Theorem \ref{thm.char0} hold also in the analytic situation of Theorem \ref{thm.semicont-an}.
\medskip

It is not difficult to see that $\Coker (n_*: \ko_X \to n_*\ko_\oX)$ and $\Ker (n^{>1}_* : \ko_X \to n_*\ko_{\oX^{>1}})$ are finite over $S$ iff the non-normal loci of $X$ and $X^{>1}$, defined by the extended conductor schemes, are finite over $S$ (see Remark \ref{rem.semicont1} (1)).
 We use this in the following Theorem \ref{thm.global}
for (global) morphisms of analytic spaces.

\begin{theorem}{\em(\cite[Theorem 7.19]{Gr17})} \label{thm.global}
	Let $f: X\to S$ be a flat morphism of $\K$-analytic spaces with $S$ a $1$--dimensional analytic manifold. Assume that the non-normal loci of 
$f$ and of $f^{>1}: X^{>1} \to S$ are finite over $S$.
Then, for $t\in S$, the fiber $X_t $ has only finitely many isolated non-normal singularities  and the following holds:
	\begin{enumerate}  		
	\item $\del(X_s)-\del(X_t) =\delta((\overline{X}^{>1})_0)\geq 0.$	
	\item The following are equivalent
	\begin{enumerate}
		\item [(i)] $f$ admits a simultaneous normalization.
		\item [(ii)] $\delta(X_t)$ is locally constant on $S$ and the $1$--dimensional part $X^1$ of $X$ is smooth and does not meet the higher dimensional part $X^{>1}$.
	\end{enumerate}
	\end{enumerate}
In particular, if $X$ has no 1-dimensional part, then
\begin{enumerate}  	
\item	 $\del(X_s)-\del(X_t) =\delta((\overline{X})_0)\geq 0.$	
\item$f \text{ admits a simultaneous normalization} \iff \delta(X_t) \text{ is locally constant on }S.$
\end{enumerate}
\end{theorem}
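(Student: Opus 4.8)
The plan is to transcribe the proof of the algebraic Theorem~\ref{thm.simnormPID} into the analytic category, using Theorem~\ref{thm.semicont-an} in place of the algebraic semicontinuity Theorem~\ref{thm.semicont1}. Since both the existence of a simultaneous normalization and the local constancy of $\delta$ are local conditions on the base, I would fix $s\in S$ and work over a small representative; as $S$ is a smooth $1$-dimensional manifold, $\ko_{S,s}$ is a regular local ring of dimension one, playing the role of the localized PID $A$. The hypothesis that the non-normal loci of $f$ and $f^{>1}$ are finite over $S$ is, by the discussion preceding the theorem, exactly the analytic analogue of the finiteness over $A$ of $Coker(\nu)$ and $Ker(\nu^{>1})$; together with the fact that analytic spaces are Nagata and the characteristic is $0$ (so fiberwise and simultaneous normalization coincide), this makes Theorem~\ref{thm.semicont-an}, Lemma~\ref{lem.base} and Lemma~\ref{lem.flat} available in the required form.

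For $(i)\Rightarrow(ii)$ I would argue as in Theorem~\ref{thm.simnormPID}(1). A simultaneous normalization $m:\wX\to X$ is, since $S$ is normal, the normalization $n:\oX\to X$ by the analytic analogue of Lemma~\ref{lem.base}(2), and $\ol{f}=f\circ n$ is flat with normal fibres. Hence $(\oX^{>1})_t$ is normal, so $\delta((\oX^{>1})_t)=0$, and Theorem~\ref{thm.semicont-an}(2)(iii) gives $\delta(X_s)-\delta(X_t)=\delta((\oX^{>1})_s)=0$, i.e. $\delta(X_t)$ is locally constant. Normality of the fibres forces the $0$-dimensional part $(\oX^1)_t$ to be reduced, and since $m_t$ is birational the fibres $(X^1)_t$ are reduced too; a finite flat family with reduced $0$-dimensional fibres over the smooth curve $S$ is étale, so $X^1$ is smooth. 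The dimension formula of Lemma~\ref{lem.base}(4) then yields $X^1\cap X^{>1}=\emptyset$: a common point would lie on a positive-dimensional fibre component coming from $X^{>1}$, forcing every component of $X$ through it to have dimension $>\dim S=1$, contradicting the $1$-dimensional component.

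For $(ii)\Rightarrow(i)$ I would exhibit the simultaneous normalization as $\wX:=\oX=X^1\sqcup\oX^{>1}$, using $\oX^1=X^1$ because $X^1$ is smooth and disjoint from $X^{>1}$; flatness of $\ol{f}$ is the analytic Lemma~\ref{lem.flat}. On the higher-dimensional part, local constancy of $\delta$ with Theorem~\ref{thm.semicont-an}(2)(iii) gives $\delta((\oX^{>1})_t)=0$ for every $t$, and together with the reducedness of $(\oX^{>1})_t$ from Theorem~\ref{thm.semicont-an}(1) this makes every fibre of $\ol{f}^{>1}$ normal; on the $1$-dimensional part, smoothness of $X^1$ over $S$ makes the fibres $(X^1)_t$ reduced and $0$-dimensional, hence normal, with $m_t$ the identity there. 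Thus the restriction of $m_t$ to each part is finite, birational and has normal source, so $m_t$ is the normalization of $X_t$; hence $\ol{f}$ is flat with normal, fibrewise birationally dominated fibres, i.e. a simultaneous normalization of $f$. The ``in particular'' clause is the special case $X^1=\emptyset$, where $(ii)$ collapses to local constancy of $\delta$, recovering the analytic analogue of Theorem~\ref{thm.simnormPID}(3).

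The main obstacle I expect is the treatment of the $1$-dimensional part $X^1$, which is absent from the clean algebraic statement Theorem~\ref{thm.simnormPID}(3): one must show that a simultaneous normalization forces $f|X^1$ to be unramified (equivalently, that the candidate $\oX^1=X^1$ has reduced fibres), while $\delta$-constancy alone cannot detect this ramification, which is precisely why the extra hypothesis ``$X^1$ smooth and disjoint from $X^{>1}$'' is indispensable. Making this rigorous relies on the sharpened form of Theorem~\ref{thm.semicont-an}(2)(ii), with the length $\dim_\K\ko_{(X^1)_t}$ of the $0$-dimensional fibre part rather than $r_1(X)$, and on the analytic finiteness results of \cite{BF93} guaranteeing that the normalization sheaf and the relevant kernel and cokernel sheaves are finite over $S$.
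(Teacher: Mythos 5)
Your overall strategy is exactly the one the paper intends: Theorem \ref{thm.global} is quoted from \cite{Gr17} without a proof, and the surrounding text only says that the proofs ``follow similar arguments as in Section \ref{sec.2} and \ref{sec.3}'' --- which is precisely your plan of transcribing Theorem \ref{thm.simnormPID} with Theorem \ref{thm.semicont-an} in place of Theorem \ref{thm.semicont1}. Your (i)$\Rightarrow$(ii) direction is correct and in fact yields more than stated: birationality of $m_t$ at an isolated point $x$ of $X_t$ forces $\ko_{X_t,x}$ to be a field, hence $\ko_{X,x}\cong\ko_{S,f(x)}$, so near $X^1$ the map $f$ is a local isomorphism; in particular $X$ is reduced along $X^1$ and $f|X^1$ is unramified, not merely ``$X^1$ smooth''.

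The gap is in (ii)$\Rightarrow$(i), at exactly the point you flag in your last paragraph but then wave away. Hypothesis (ii) as literally stated ($\delta$ locally constant, $X^1$ smooth and disjoint from $X^{>1}$) does not give what your construction needs on the $1$-dimensional part. Take $X=\{y^2=t\}\subset\C^2\to S=\C_t$: here $X=X^1$ is smooth, $X^{>1}=\emptyset$, $f$ is flat, $NNor(f)=\{(0,0)\}$ is finite over $S$, and $\delta(X_t)=-2$ for all $t$ (two reduced points for $t\neq 0$, one double point for $t=0$), so (ii) holds; but $X_0$ is a non-reduced isolated point, so no finite $m:\wX\to X$ with normal fibers can have $m_0$ birational in the sense of Definition \ref{def.simnorm}. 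The same problem occurs if $X$ carries a non-reduced structure along $X^1$ (e.g.\ $X=S\times\Spec\K[\eps]/\langle\eps^2\rangle$), which ``$X^1$ smooth'' does not exclude, since $X^1$ is reduced by definition. Your proof silently substitutes ``smoothness of $X^1$ over $S$'' for ``$X^1$ smooth'' --- these are different conditions, and even the former does not rule out a non-reduced structure of $X$ along $X^1$. What your construction of $\wX=X^1\sqcup\oX^{>1}$ actually needs is that $X_t$ be reduced at its isolated points, equivalently that $X$ be reduced along $X^1$ and $f|X^1$ be locally biholomorphic; as you yourself observe, $\delta$-constancy cannot detect this, so either (ii) must be read or strengthened accordingly, or the step ``$m_t$ is the identity there'' fails. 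The higher-dimensional part of your argument, and the ``in particular'' clause where $X^1=\emptyset$, are unaffected and complete.
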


The finiteness of $\NNor (f^{>1})$ over $S$ was mistakenly omitted in \cite[Theorem 7.19]{Gr17}.
The following example shows that it is necessary.

\begin{example}\label{ex.affine}
{\em Let $X= V(((tx-1)^2-y^3t^2)y)$, a reduced surface in $\C^3$, and $f:X\to \C$ the projection $(t,x,y)\mapsto t$. The non-normal locus of $f$ is the curve $\{y=tx-1=0\}$ which is quasi-finite over $\C$ (but not finite) and all fibers $X_t$ are reduced curves. 
It is easy to see that $\delta(X_0)=0$ but $\delta(X_t)=5$ for $t\neq 0$  (or we can use {\sc Singular} \cite{DGPS}), showing that $\del$ is not semicontinuous.
}
\end{example}



\end{document}